\theoremstyle{plain}
\newtheorem{theorem}{Theorem}[section]
\newtheorem{lemma}[theorem]{Lemma}
\newtheorem{definition}[theorem]{Definition}
\newtheorem{remark}[theorem]{Remark}
\numberwithin{equation}{section}
\begin{document}

\title[Continuity properties and ill-posedness]{Continuity properties of the data-to-solution map and ill-posedness for a two-component Fornberg-Whitham system}

\author[Fei Xu, Yong Zhang, Fengquan Li]{Xu Fei, Zhang Yong, Fengquan Li}

\address[Fei Xu]{School of Mathematical Sciences, Dalian University
of Technology, Dalian, 116024, China}

\address[Yong Zhang]{ School of Mathematical Sciences, Dalian University
of Technology, Dalian, 116024, China} \email{18842629891@163.com}

\address[Fengquan Li]{School of Mathematical Sciences, Dalian University
of Technology, Dalian, 116024, China}

\begin{abstract}
This work studies a two-component Fornberg-Whitham (FW) system, which can be considered as a model for the propagation of shallow water waves. It's known that its solutions depend continuously on their initial data from the local well-posedness result. In this paper, we further show that such dependence is not uniformly continuous in $H^{s}(R)\times H^{s-1}(R)$ for $s>\frac{3}{2}$, but H\"{o}ler continuous in a weaker topology. Besides, we also establish that the FW system is ill-posed in the critical Sobolev space $H^{\frac{3}{2}}(R)\times H^{\frac{1}{2}}(R)$ by proving the norm inflation.
	\end{abstract}
\maketitle
%\noindent {\bf Key Words:}
%{Global solutions;  Oldroyd-B model; inviscid limit;  Besov space}

%\noindent {\bf Mathematics Subject Classification (2010)} {76A10; 76D03 }
\section{Introduction}
In this paper, we consider the cauchy problem of following two-component Fornberg-Whitham system
 \begin{equation}\label{e11}
 \left\{ \begin{array}{ll}
 u_{t}-u_{txx}+u_{x}+uu_{x}=3u_{x}u_{xx}+uu_{xxx}+\eta_{x}, & t>0, x\in {\rm R}, \\
 \eta_{t}+(\eta u)_{x}=0,  & t>0, x\in {\rm R},\\
 u(x,0)=u_{0}(x), ~~\eta(x,0)=\eta_{0}(x),
 \end{array} \right.
 \end{equation}
where $u=u(x,t)$ describes the horizontal velocity of the fluid and $\eta=\eta(x,t)$ is related to the deviation of the water surface from equilibrium. The system (\ref{e11}) can be written by the nonlocal form
\begin{equation}\label{e12}
 \left\{ \begin{array}{ll}
 u_{t}+uu_{x}=\partial_{x}\Lambda^{-2}(\eta-u), & t>0, x\in {\rm R}, \\
 \eta_{t}+(\eta u)_{x}=0,  & t>0, x\in {\rm R},\\
 u(x,0)=u_{0}(x), ~~\eta(x,0)=\eta_{0}(x),
 \end{array} \right.
 \end{equation}
where $\Lambda:=(I-\partial_{x}^{2})^{\frac{1}{2}}.$ Motivated by generation of two-component Camassa-Holm (CH) system in \cite{16} and two-component Degasperis Procesi (DP) system in \cite{18}, Fan et al. in \cite{17} generalized the Fornberg-Whitham equation to the two-component system. Unlike the CH system (equation) or DP system (equation), they possess infinitely many conserved quantities, a Lax pair and a bi-Hamiltonian structure. The FW system (equation) is not completely integrable (see \cite{24}) and loses some important conversation laws. However, it captures several mathematical
features of the Euler equations, which the KdV equation does not (including nonlocality, wave breaking and highest waves) see \cite{8,25,26}. Hence, it's meaningful to investigate the properties of the FW system (equation) as a good alternative to the KdV equation in water waves.  The travelling wave solutions of FW system (\ref{e12}) were investigated in \cite{17}, where solitary solutions, kink solutions, antikink solutions and periodic wave solutions were given. Recently, the local well-posedness of the FW system (\ref{e12}) in $H^{s}(R)\times H^{s-1}(R)$, $s>\frac{3}{2}$, has been established in \cite{5}. In view of the local well-posedness result, we find that the solution of FW system (\ref{e12}) continuously depends on its initial data in $H^{s}(R)\times H^{s-1}(R)$.

One of aims in this paper is to show the well-posedness is sharp in the sense that the data-to-solution map not uniformly continuous but H\"{o}lder continuous in a weaker topology. Here we mainly use the method of approximate solutions to show the nonuniform continuity of data-to-solution map. As far as we know, a family of high-low frequency approximate solutions was firstly introduced by Koch and Tzvetkov in \cite{1} to prove that data-to-solution map of Benjamin-Ono equation was not better than continuous. Later, more and more high-low frequency approximate solutions were constructed to show that the map of different evolution equations (systems) was not uniformly continuous. For instance, Himonas and Kenig in \cite{2} used the method to CH equation on the line and Wang et al. in \cite{19} and Yang in \cite{7} applied for the rotation-two-component CH system and so on. Unfortunately, these constructions are not suitable for FW system due to the special form of the right hand side of (\ref{e12}). Thus, here we reconstruct a new family of approximate solutions based on the ideas in \cite{2,4,8} to show the nonuniform continuity. In addition, the research of H\"{o}lder continuity of data-to-solution map in a weaker topology has attracted numerous interests. We refer to \cite{20}-\cite{22} for the b-equation and \cite{11} for two-component higher order Camassa-Holm system, which enable us to have a good understanding of the well-posedness problem of evolution equations.

Another motivation for this paper comes from the idea of \cite{3}, where authors solved an open problem left in \cite{23} and they answered positively that CH equation was ill-posedness in critical Sobolev space $H^{\frac{3}{2}}(R)$. This raises an interesting question whether the FW system (\ref{e12}) is well-posed in critical space $H^{\frac{3}{2}}(R)\times H^{\frac{1}{2}}(R)$. Considering the particularity in the structure of FW system, here we construct the special initial data to meet the requirement of norm inflation, which implies the ill-posedness in $H^{\frac{3}{2}}(R)\times H^{\frac{1}{2}}(R)$. Similarly, these results can easily be extended to a series of Besov spaces.

The plan of the paper is as follows. Section 2 is devoted to collect some useful lemmas that we need later. In section 3, we apply the method of approximate solutions to establish that the data-to-solution map of FW system is not uniformly continuous and use the energy method to show that it's H\"{o}lder continuous in a weaker topology. In the last section, we prove that the FW system (\ref{e12}) is ill-posed in critical space $H^{\frac{3}{2}}(R)\times H^{\frac{1}{2}}(R)$.
\section{Preliminaries}
In this section, we collect some useful lemmas that we need and give a more refined priori estimate of solution to (\ref{e12}). Throughout this paper, we denote $f\lesssim g$ when
$f\leq cg$ for some constant $c>0$, and $f\approx g$ when $f\lesssim g \lesssim f$.
\begin{lemma}\label{lem2.1}(see \cite{1})
Let $\phi\in S(R)$, $\delta>0$ and $c\in R$. Then, for any $s\geq 0$, we have that
$$
\lim_{n\rightarrow \infty} n^{-\frac{\delta}{2}-s}\|\phi(\frac{x}{n^{\delta}})\cos(nx-c)\|_{H^{s}(R)}=\frac{1}{\sqrt{2}}\|\phi\|_{L^{2}(R)}.
$$
This relation is also true if the function $\cos$ is replaced by $\sin$.
\end{lemma}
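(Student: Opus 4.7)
The plan is to exploit Plancherel's theorem to convert the $H^s$ norm into an integral on the Fourier side, where the two-frequency modulation structure becomes transparent. Writing $\cos(nx-c) = \tfrac{1}{2}(e^{-ic}e^{inx} + e^{ic}e^{-inx})$ and using the scaling law for the Fourier transform, the transform of $f_n(x) := \phi(x/n^{\delta})\cos(nx-c)$ consists of two translated Schwartz profiles of width $\sim n^{-\delta}$ centered at $\xi = \pm n$:
\[
\widehat{f_n}(\xi) = \frac{n^{\delta}}{2}\bigl[e^{-ic}\widehat{\phi}(n^{\delta}(\xi-n)) + e^{ic}\widehat{\phi}(n^{\delta}(\xi+n))\bigr].
\]
Expanding $|\widehat{f_n}|^2$ produces two diagonal terms and one cross term, and I would analyse each separately against the weight $(1+\xi^2)^s$.

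For each diagonal term I would perform the change of variable $\eta = n^{\delta}(\xi \mp n)$, which turns the integrand, after absorbing the $n^{-\delta-2s}$ prefactor from the statement, into $n^{-2s}(1 + (n \pm \eta/n^{\delta})^2)^s |\widehat{\phi}(\eta)|^2$. The bracket converges pointwise to $1$ as $n\to\infty$, so dominated convergence (with the $\R$-integrable majorant $(1+|\eta|^{2s})|\widehat{\phi}(\eta)|^2$, which exists by the Schwartz decay of $\widehat{\phi}$) gives a limit of $\tfrac{1}{4}\|\phi\|_{L^2}^2$ from each of the two diagonal terms.

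For the cross term, the point is that at every $\xi$ one has $|\xi-n|+|\xi+n|\geq 2n$, so at least one of the arguments $n^{\delta}(\xi\mp n)$ has modulus $\geq n^{1+\delta}$. Since $\widehat{\phi}\in S(\R)$, the product $|\widehat{\phi}(n^{\delta}(\xi-n))\widehat{\phi}(n^{\delta}(\xi+n))|$ decays faster than any polynomial in $n$, uniformly in $\xi$; after the $n^{-\delta-2s}$ rescaling this term tends to $0$. Adding everything up gives $n^{-\delta-2s}\|f_n\|_{H^s}^2 \to \tfrac{1}{2}\|\phi\|_{L^2}^2$, and taking square roots yields the claimed identity. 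The $\sin$ case reduces to the cosine one via $c\mapsto c-\pi/2$.

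The only delicate point is the justification of dominated convergence: one must find a majorant depending only on $\eta$, integrable against $|\widehat{\phi}(\eta)|^2$, that uniformly controls $n^{-2s}(1+(n+\eta/n^{\delta})^2)^s$. The elementary estimate $(1+(n+\eta/n^{\delta})^2)^s \lesssim n^{2s}(1 + |\eta/n^{\delta}|^{2s}) \lesssim n^{2s}(1+|\eta|^{2s})$ does the job, so the main obstacle is bookkeeping — cleanly separating the two frequency packets and controlling the cross interaction — rather than any deep estimate.
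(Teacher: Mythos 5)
The paper does not prove this lemma at all --- it is quoted verbatim from the Koch--Tzvetkov / Himonas--Kenig literature with the citation ``see \cite{1}'' --- so there is no internal proof to compare against; your write-up supplies the standard argument, and it is correct. The Plancherel decomposition into two frequency packets of width $n^{-\delta}$ at $\xi=\pm n$, the change of variables $\eta=n^{\delta}(\xi\mp n)$ with dominated convergence giving $\tfrac14\|\phi\|_{L^2}^2$ from each diagonal term, and the vanishing of the cross term are exactly how this is done, and your majorant $n^{-2s}\bigl(1+(n+\eta n^{-\delta})^2\bigr)^s\lesssim 1+|\eta|^{2s}$ is the right one. One small point to tighten: for the cross term, uniform-in-$\xi$ rapid decay in $n$ of the product $|\widehat{\phi}(n^{\delta}(\xi-n))\widehat{\phi}(n^{\delta}(\xi+n))|$ is not by itself enough, since you still integrate against the unbounded weight $(1+\xi^2)^s$ over all of $\R$; you should split the Schwartz decay of one factor, using $(1+|a|)(1+|b|)\geq 1+|a|+|b|\geq 1+2n^{1+\delta}$ for $a=n^{\delta}(\xi-n)$, $b=n^{\delta}(\xi+n)$, to extract a factor $(1+n^{1+\delta})^{-K}$ while retaining enough integrable decay in $\xi$ to absorb the polynomial weight. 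This is routine and does not affect the conclusion; likewise the sign discrepancy in $c\mapsto c-\pi/2$ versus $c+\pi/2$ for the sine case is immaterial since only the norm is taken.
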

\begin{lemma}\label{lem2.2} (Interpolation lemma)
Let $s_{1}< s < s_{2}$ be real numbers, then
$$
  \|f\|_{H^{s}}\leq \|f\|_{H^{s_{1}}}^{\frac{s_{2}-s}{s_{2}-s_{1}}} \|f\|_{H^{s_{2}}}^{\frac{s-s_{1}}{s_{2}-s_{1}}}.
$$
\end{lemma}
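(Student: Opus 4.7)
The plan is to prove this standard Sobolev interpolation inequality using the Fourier-analytic definition of the $H^s$ norm together with H\"older's inequality applied in frequency space. Recall that for any real number $\sigma$,
\[
\|f\|_{H^\sigma}^2 = \int_{\mathbb R} (1+|\xi|^2)^\sigma |\widehat f(\xi)|^2 \, d\xi.
\]
The key observation is that if we set $\theta = \frac{s_2-s}{s_2-s_1}\in(0,1)$, so that $1-\theta = \frac{s-s_1}{s_2-s_1}$, then the condition $s_1 < s < s_2$ gives the affine identity $s = \theta s_1 + (1-\theta) s_2$. This is what will let us factor the weight.

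First I would rewrite $(1+|\xi|^2)^s$ as $(1+|\xi|^2)^{\theta s_1}(1+|\xi|^2)^{(1-\theta)s_2}$, and similarly split $|\widehat f(\xi)|^2 = |\widehat f(\xi)|^{2\theta}\,|\widehat f(\xi)|^{2(1-\theta)}$, so that the integrand becomes
\[
\bigl[(1+|\xi|^2)^{s_1}|\widehat f(\xi)|^2\bigr]^{\theta}\,\bigl[(1+|\xi|^2)^{s_2}|\widehat f(\xi)|^2\bigr]^{1-\theta}.
\]
Next I would apply H\"older's inequality with conjugate exponents $p = 1/\theta$ and $p' = 1/(1-\theta)$ to conclude
\[
\|f\|_{H^s}^2 \le \left(\int (1+|\xi|^2)^{s_1}|\widehat f(\xi)|^2 \, d\xi\right)^{\theta} \left(\int (1+|\xi|^2)^{s_2}|\widehat f(\xi)|^2 \, d\xi\right)^{1-\theta} = \|f\|_{H^{s_1}}^{2\theta}\,\|f\|_{H^{s_2}}^{2(1-\theta)}.
\]
Taking square roots yields the claim, with the exponents $\theta$ and $1-\theta$ matching exactly $\frac{s_2-s}{s_2-s_1}$ and $\frac{s-s_1}{s_2-s_1}$.

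There is no serious obstacle here: the only thing to check carefully is the arithmetic showing that the chosen $\theta$ produces the correct weights, and the verification that $1/\theta$ and $1/(1-\theta)$ are valid H\"older conjugates (which is immediate from $\theta\in(0,1)$). The statement and proof do not depend on the dimension, so the same argument works on $\mathbb R^n$ or $\mathbb T$ without modification, and one may also handle the degenerate cases $s=s_1$ or $s=s_2$ trivially.
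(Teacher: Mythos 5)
Your proof is correct: the paper states Lemma 2.2 without proof (it is a standard fact), and your argument --- writing $s=\theta s_1+(1-\theta)s_2$ with $\theta=\frac{s_2-s}{s_2-s_1}$, factoring the weight $(1+|\xi|^2)^s$, and applying H\"older's inequality with exponents $1/\theta$ and $1/(1-\theta)$ in frequency space --- is exactly the standard derivation one would supply. No gaps; the arithmetic identity $\theta s_1+(1-\theta)s_2=s$ and the conjugacy of the exponents are both verified correctly.
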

\begin{lemma}\label{lem2.3} (See \cite{10})
If $s>\frac{3}{2}$ and $0\leq k+1\leq s$, then there exists a constant $c>0$ such that
$$
\|[\Lambda^{k}\partial_{x},f]g\|_{L^{2}(R)}\leq c\|f\|_{H^{s}(R)}\|g\|_{H^{k}(R)},
$$
where $[\Lambda^{k}\partial_{x},f]:=\Lambda^{k}\partial_{x}f-f\Lambda^{k}\partial_{x}$.
\end{lemma}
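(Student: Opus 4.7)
The plan is to establish the commutator estimate via Littlewood--Paley theory and Bony's paraproduct calculus. Writing $fg = T_f g + T_g f + R(f,g)$ with the usual paraproduct and remainder, one has
\begin{align*}
[\Lambda^k\partial_x,f]g &= \bigl(\Lambda^k\partial_x T_f g - T_f\Lambda^k\partial_x g\bigr) \\
&\quad + \Lambda^k\partial_x\bigl(T_g f + R(f,g)\bigr) - T_{\Lambda^k\partial_x g}f - R(f,\Lambda^k\partial_x g),
\end{align*}
where the first line is a genuine commutator between $\Lambda^k\partial_x$ and the paraproduct operator $T_f$, and the remaining pieces are lower-order paraproducts and remainders with the derivatives moved entirely off $g$.

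For the paraproduct commutator, each dyadic block is $[\Lambda^k\partial_x, S_{j-1}f]\dot\Delta_j g$. Using that $\widehat{S_{j-1}f}$ lives on frequencies $\lesssim 2^{j-2}$ while $\widehat{\dot\Delta_j g}$ is supported in an annulus at scale $2^j$, a mean value / symbol-calculus argument on the multiplier $(1+|\xi|^2)^{k/2}(i\xi)$ gains one derivative on $f$ and yields a pointwise block bound of the form $\|\partial_x S_{j-1}f\|_{L^\infty}\,2^{jk}\,\|\dot\Delta_j g\|_{L^2}$. Square-summing in $j$ via almost orthogonality and invoking $\|\partial_x f\|_{L^\infty}\lesssim \|f\|_{H^s}$ (which follows from $s-1>\tfrac12$) gives $\lesssim \|f\|_{H^s}\|g\|_{H^k}$ for this piece.

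The remaining terms are handled by placing all $k+1$ derivatives onto $f$, which is permitted by the hypothesis $k+1\le s$. In the paraproduct $T_{\Lambda^k\partial_x g}f$ and the symmetrised remainder $R(f,\Lambda^k\partial_x g)$, Bernstein's inequality at each dyadic scale followed by Cauchy--Schwarz in $j$ yields bounds of the form $\|f\|_{H^{k+1}}\|g\|_{H^k}\lesssim \|f\|_{H^s}\|g\|_{H^k}$. The principal obstacle is the low-regularity range $0\le k\le\tfrac12$, where Sobolev embedding does not hand $g\in L^\infty$ directly; one must instead keep $g$ exclusively on the $L^2$ side and transfer all $L^\infty$ cost onto $f$, which is permissible since $H^s\hookrightarrow W^{1,\infty}$ for $s>\tfrac32$. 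The delicate bookkeeping lies in ensuring that the high-high interactions in $R(f,\Lambda^k\partial_x g)$ do not require more than $k+1$ derivatives on $f$, which is exactly what the sharp hypothesis $k+1\le s$ is designed to afford.
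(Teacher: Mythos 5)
You should know at the outset that the paper does not prove this lemma: it is quoted directly from Taylor's commutator paper (reference [10]), so there is no in-house argument to compare against. Your paradifferential route is the standard proof of this estimate (it is also the engine behind Lemma \ref{lem2.9}, which the paper imports from Bahouri--Chemin--Danchin), and the skeleton is sound: splitting off the paraproduct commutator $[\Lambda^{k}\partial_{x},T_{f}]g$ from the terms where all $k+1$ derivatives can be dumped onto $f$ is the right decomposition; the one-derivative gain on each block, $\|[\Lambda^{k}\partial_{x},S_{j-1}f]\Delta_{j}g\|_{L^{2}}\lesssim 2^{jk}\|\partial_{x}S_{j-1}f\|_{L^{\infty}}\|\Delta_{j}g\|_{L^{2}}$, is the correct key estimate; and $\|\partial_{x}f\|_{L^{\infty}}\lesssim\|f\|_{H^{s}}$ for $s>\frac{3}{2}$ closes that piece after almost-orthogonal square-summation.

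Two caveats. First, your claimed intermediate bound $\|f\|_{H^{k+1}}\|g\|_{H^{k}}$ for $T_{\Lambda^{k}\partial_{x}g}f$ and $R(f,\Lambda^{k}\partial_{x}g)$ is only literally available when $k>\frac{1}{2}$, where $H^{k}\hookrightarrow L^{\infty}$. In the range $-1\leq k\leq\frac{1}{2}$ the substitute you gesture at but never write down is
$$
\|S_{j-1}\Lambda^{k}\partial_{x}g\|_{L^{\infty}}\lesssim\sum_{j'\leq j-2}2^{j'(k+\frac{3}{2})}\|\Delta_{j'}g\|_{L^{2}}\lesssim 2^{\frac{3j}{2}}\|g\|_{H^{k}}
$$
by Bernstein and Cauchy--Schwarz over the low frequencies, after which these terms (and $\Lambda^{k}\partial_{x}T_{g}f$, $\Lambda^{k}\partial_{x}R(f,g)$) cost $\|f\|_{H^{3/2}}\|g\|_{H^{k}}$; in that regime the binding hypothesis is $s>\frac{3}{2}$, not $k+1\leq s$, so your closing remark about which hypothesis does what is slightly off. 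Second, the whole text is a plan rather than a proof: the symbol-calculus lemma for the block commutator and the actual summations are asserted, not executed. As an outline it is correct and consistent with the method of the cited source; to stand on its own it needs those computations carried out.
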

\begin{lemma}\label{lem2.4}
For $s>0$ and $f, g\in H^{s}(R)\cap L^{\infty}(R)$, we have
$$
\|fg\|_{H^{s}(R)}\leq C\|f\|_{L^{\infty}(R)}\|g\|_{H^{s}(R)}+\|f\|_{H^{s}(R)}\|g\|_{L^{\infty}(R)};
$$
For $s>\frac{1}{2}$ and $f, g\in H^{s}(R)$, we have
$$
\|fg\|_{H^{s}(R)}\leq C\|f\|_{H^{s}(R)}\|g\|_{H^{s}(R)}.
$$
\end{lemma}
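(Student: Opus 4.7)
The plan is to establish the first (tame) inequality via Bony's paraproduct decomposition, then derive the second inequality from it by Sobolev embedding. Introducing the inhomogeneous Littlewood-Paley blocks $\Delta_j$ and low-frequency cutoffs $S_j := \sum_{k\le j-1}\Delta_k$, write
\[
fg \;=\; T_f g + T_g f + R(f,g),
\]
where $T_f g := \sum_{j} S_{j-1} f \cdot \Delta_j g$ is the paraproduct and $R(f,g) := \sum_{|j-k|\le 1} \Delta_j f \cdot \Delta_k g$ is the remainder.

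For each paraproduct term, the dyadic block $\Delta_q(T_f g)$ is Fourier-localized in an annulus of radius $\sim 2^q$, and Bernstein's inequality gives $\|S_{j-1} f\|_{L^\infty} \lesssim \|f\|_{L^\infty}$. A standard almost-orthogonality argument then yields
\[
\|T_f g\|_{H^s} \lesssim \|f\|_{L^\infty}\|g\|_{H^s}, \qquad \|T_g f\|_{H^s} \lesssim \|g\|_{L^\infty}\|f\|_{H^s},
\]
with no restriction on $s$. For the remainder, each product $\Delta_j f \cdot \Delta_k g$ with $|j-k|\le 1$ has spectrum in a ball of radius $\sim 2^j$, so
\[
\|\Delta_q R(f,g)\|_{L^2} \;\lesssim\; \sum_{j\ge q-N_0} \|\Delta_j f\|_{L^\infty}\|\Delta_j g\|_{L^2}.
\]
Multiplying by $2^{qs}$ and applying the discrete Young (or Schur) inequality to the resulting $\ell^2$-convolution gives $\|R(f,g)\|_{H^s} \lesssim \|f\|_{L^\infty}\|g\|_{H^s}$, where the geometric series $\sum_{j\ge q} 2^{-(j-q)s}$ converges precisely because $s>0$. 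Summing the three pieces produces the first inequality.

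For the second inequality, the hypothesis $s > 1/2$ allows the Sobolev embedding $H^s(\mathbb{R}) \hookrightarrow L^\infty(\mathbb{R})$, so that $\|f\|_{L^\infty} \lesssim \|f\|_{H^s}$ and $\|g\|_{L^\infty} \lesssim \|g\|_{H^s}$; plugging these into the tame estimate yields $\|fg\|_{H^s} \lesssim \|f\|_{H^s}\|g\|_{H^s}$ immediately.

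The only delicate step is the treatment of the remainder $R(f,g)$: both the necessity of the condition $s > 0$ (without which the geometric series diverges) and the placement of one factor in $L^\infty$ via Bernstein's inequality are essential. The paraproduct terms themselves are insensitive to the value of $s$ and cause no difficulty, and the deduction of the second bound from the first is routine. Since this is a classical Moser-type product estimate (see e.g.\ Runst-Sickel or Bahouri-Chemin-Danchin), no serious obstacle is anticipated beyond the bookkeeping sketched above.
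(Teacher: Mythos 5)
Your proof is correct. The paper itself offers no proof of Lemma \ref{lem2.4}; it is recorded in the preliminaries as a standard Moser-type product estimate (it is, e.g., Corollary 2.86 in the Bahouri--Chemin--Danchin reference \cite{15} already cited by the paper), so there is nothing to compare against: your Bony decomposition argument --- paraproducts bounded for all $s$, remainder requiring $s>0$ via the convergent geometric series, and the algebra property deduced from the embedding $H^{s}(R)\hookrightarrow L^{\infty}(R)$ for $s>\frac{1}{2}$ --- is exactly the standard route and is sound. One cosmetic remark: the uniform bound $\|S_{j-1}f\|_{L^{\infty}}\lesssim \|f\|_{L^{\infty}}$ comes from Young's inequality for the convolution with the ($L^{1}$-normalized) low-frequency kernel rather than from Bernstein's inequality, but this does not affect the argument.
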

\begin{lemma}\label{lem2.5} (See \cite{3})
Let $T>0$ could be infinity, assume $A(t)\in C^{1}([0,T))$, $A(t)>0$ and there exists a constant $C>0$ such that
$$
\frac{d}{dt}A(t)\leq C A(t)ln(e+A(t)), ~~for~~t\in[0,T).
$$
Then we have
$$
A(t)\leq (e+A(0))^{e^{Ct}}, ~~for~~t\in[0,T).
$$
\end{lemma}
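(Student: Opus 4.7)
The plan is to reduce the nonlinear differential inequality to a linear one by a logarithmic substitution, and then apply the classical Gronwall inequality.

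First I would set $B(t):=\ln(e+A(t))$, which is well-defined and strictly positive since $A(t)>0$, and is $C^{1}$ on $[0,T)$ because $A$ is. Differentiating gives $B'(t)=\dfrac{A'(t)}{e+A(t)}$. Substituting the hypothesis $A'(t)\le CA(t)\ln(e+A(t))$ and using the elementary bound $\dfrac{A(t)}{e+A(t)}\le 1$, I obtain the linear differential inequality
\begin{equation*}
B'(t)\;\le\;C\,\frac{A(t)}{e+A(t)}\,\ln(e+A(t))\;\le\;C\,B(t),\qquad t\in[0,T).
\end{equation*}

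Second, since $B$ is $C^{1}$ and positive, the classical (linear) Gronwall inequality applied to $B$ yields $B(t)\le B(0)\,e^{Ct}$ for all $t\in[0,T)$; equivalently, dividing by $B$ and integrating $(\ln B)'\le C$ from $0$ to $t$ works directly. Translating back via the definition of $B$ gives
\begin{equation*}
\ln(e+A(t))\;\le\;e^{Ct}\,\ln(e+A(0)),
\end{equation*}
and exponentiating both sides produces $e+A(t)\le(e+A(0))^{e^{Ct}}$, from which the desired estimate $A(t)\le(e+A(0))^{e^{Ct}}$ follows at once since $A(t)\le e+A(t)$.

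There is no real obstacle here; the only place where one must be a little careful is justifying that $B(t)$ remains finite so that the substitution and the Gronwall step are valid on the full interval $[0,T)$. This is immediate from the hypothesis that $A\in C^{1}([0,T))$: on any compact subinterval $[0,T']\subset[0,T)$, $A$ is bounded, hence so is $B$, and the Gronwall bound extends to all of $[0,T)$ by letting $T'\uparrow T$.
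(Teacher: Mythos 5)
Your proof is correct and is precisely the standard logarithmic-substitution Gronwall argument; the paper itself gives no proof of this lemma, simply citing \cite{3}, where the same reduction of $A'\le CA\ln(e+A)$ to the linear inequality $B'\le CB$ for $B=\ln(e+A)$ is used. No gaps: $B(0)=\ln(e+A(0))\ge 1>0$, so the division by $B$ in the Gronwall step is legitimate, and the finiteness remark at the end is immediate.
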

\begin{lemma}\label{lem2.6}(Littlewood-Paley decomposition)
There exists a couple of smooth radial function $(\chi, \varphi)$ valued in
$[0, 1]$ such that $\chi$ is supported in the ball $B=\{\xi\in R,  |\xi| \leq \frac{4}{3}\}$ and $\varphi$
is supported in the ring $\mathcal{C}= \{\xi\in R, \frac{3}{4}\leq|\xi|\leq \frac{8}{3}\}$. Moreover,
$$
\forall \xi \in R,~~ \chi(\xi)+ \sum_{j\in 0}\varphi(2^{-j}\xi)=1
$$
and
$$
Supp \varphi(2^{-j}\cdot)\cap Supp \varphi(2^{-j'}\cdot)=\emptyset, ~~if~~ |j-j'|\geq 2,
$$
$$
Supp \chi(\cdot)\cap Supp \varphi(2^{-j}\cdot)=\emptyset, ~~if~~ |j|\geq 1.
$$
Then for $u\in \mathcal{S'}$, we have
$$
u=\sum_{j\geq -1}\Delta_{j}u ~~in~~\mathcal{S'}(R),
$$
where the nonhomogeneous dyadic operators are defined by
$$
\Delta_{j}u=0,~~if~~ j\leq -2,
$$
$$
\Delta_{-1}u=\chi(D)u=\mathcal{F}^{-1}(\chi(\xi)\hat{u}(\xi))(x),
$$
$$
\Delta_{j}u=\varphi(2^{-j}D)u=\mathcal{F}^{-1}(\varphi(2^{-j}\xi)\hat{u}(\xi))(x),~~if~~j\geq 0.
$$
\end{lemma}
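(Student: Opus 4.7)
The plan is to prove Lemma 2.6 by the standard dyadic partition of unity construction, exactly as done in Bahouri--Chemin--Danchin. First I would pick any smooth radial cutoff $\chi:\R\to[0,1]$ with $\chi\equiv 1$ on $\{|\xi|\le 3/4\}$, $\chi\equiv 0$ on $\{|\xi|\ge 4/3\}$, and $\chi$ nonincreasing in $|\xi|$ on $\R_{+}$. Such a function is easily obtained by mollifying an indicator, or from two standard bump functions glued on the transition annulus. Then define
\begin{equation*}
\varphi(\xi):=\chi(\xi/2)-\chi(\xi).
\end{equation*}
By construction $\varphi$ is smooth, radial, valued in $[0,1]$, and $\varphi(\xi)=0$ whenever $|\xi|\le 3/4$ (both terms equal $1$ there) or $|\xi|\ge 8/3$ (both equal $0$ there), so $\mathrm{supp}\,\varphi\subset\mathcal{C}=\{3/4\le|\xi|\le 8/3\}$.

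Next I would establish the resolution of unity. Setting $\varphi_{j}(\xi):=\varphi(2^{-j}\xi)=\chi(2^{-j-1}\xi)-\chi(2^{-j}\xi)$, the sum telescopes:
\begin{equation*}
\chi(\xi)+\sum_{j=0}^{N}\varphi(2^{-j}\xi)=\chi(2^{-N-1}\xi).
\end{equation*}
Since $\chi(2^{-N-1}\xi)\to 1$ as $N\to\infty$ for each fixed $\xi$, one obtains $\chi(\xi)+\sum_{j\ge 0}\varphi(2^{-j}\xi)=1$. For the support disjointness: if $|j-j'|\ge 2$, say $j'\ge j+2$, then $\mathrm{supp}\,\varphi(2^{-j}\cdot)\subset\{|\xi|\le (8/3)2^{j}\}$ while $\mathrm{supp}\,\varphi(2^{-j'}\cdot)\subset\{|\xi|\ge (3/4)2^{j'}\ge 3\cdot 2^{j}\}$, and $8/3<3$ so the supports are disjoint; analogously, if $|j|\ge 1$ then $\chi$ is supported in $|\xi|\le 4/3$ while $\varphi(2^{-j}\cdot)$ is supported in $|\xi|\ge (3/4)2\,=3/2>4/3$.

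Finally, given $u\in\mathcal{S}'(\R)$, I would apply $\mathcal{F}^{-1}$ multiplier by multiplier to the identity $1=\chi(\xi)+\sum_{j\ge 0}\varphi(2^{-j}\xi)$ acting on $\hat u$, and use the fact that the partial sums converge to $\hat u$ in $\mathcal{S}'$ (the multipliers $\chi(2^{-N-1}\xi)$ converge to $1$ pointwise and are uniformly bounded, hence define operators converging to the identity on $\mathcal{S}'$). This gives the decomposition $u=\sum_{j\ge -1}\Delta_{j}u$ in $\mathcal{S}'(\R)$ with the operators $\Delta_{j}$ defined as in the statement. The main technical point, though routine, is the construction of $\chi$ with the precise constants $3/4$ and $4/3$ tuned so that the doubling gives exactly the advertised annulus and so that the support-separation conditions read off cleanly; everything else is an algebraic telescoping and a standard $\mathcal{S}'$ convergence argument.
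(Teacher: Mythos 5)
The paper offers no proof of this lemma at all: it is the classical dyadic partition of unity (essentially Proposition 2.10 of the reference [15] by Bahouri--Chemin--Danchin), quoted as a known preliminary. Your construction --- choosing a monotone radial cutoff $\chi$, setting $\varphi(\xi)=\chi(\xi/2)-\chi(\xi)$ so that nonnegativity, the support localization, and the telescoping identity $\chi(\xi)+\sum_{j=0}^{N}\varphi(2^{-j}\xi)=\chi(2^{-N-1}\xi)$ all follow, then passing to the limit in $\mathcal{S}'$ by duality --- is exactly the standard argument and is correct. Two minor points worth tightening: the paper's third support claim as literally written ($|j|\ge 1$ rather than $j\ge 1$) fails for negative $j$ (e.g.\ $\varphi(2\xi)$ meets $\mathrm{supp}\,\chi$), and your proof, which only treats $j\ge 1$, is the correct reading since only $j\ge 0$ enters the decomposition; and the final $\mathcal{S}'$ convergence should be justified by checking that $\chi(2^{-N-1}\cdot)\psi\to\psi$ in $\mathcal{S}$ for test functions $\psi$ (using that $\chi(2^{-N-1}\cdot)\equiv 1$ on a ball of radius $\tfrac{3}{4}2^{N+1}$ with uniformly bounded derivatives), since pointwise convergence plus uniform boundedness of the multipliers alone is not a sufficient general principle on $\mathcal{S}'$.
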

\begin{definition}\label{def2.7}(Besov spaves)
Let $s\in R$, $1\leq p,r\leq \infty$. The inhomogeneous Besov space $B^{s}_{p,r}(R)$ is defined by
$$
B^{s}_{p,r}(R):=\{f\in S'(R): \|f\|_{B^{s}_{p,r}}<\infty\},
$$
where
$$
\|f\|_{B^{s}_{p,r}}=\left\{ \begin{array}{ll}
(\sum_{j\in Z}2^{jsr}\|\Delta_{j}f\|_{L^{p}}^{r})^{\frac{1}{r}}, & r<\infty, \\
 \sup_{j\in Z}2^{jsr}\|\Delta_{j}f\|_{L^{p}},  & r=\infty .
 \end{array} \right.
$$
If $s=\infty$, then $B^{\infty}_{p,r}=\bigcap_{s\in R}B^{s}_{p,r}$; for $s\in R$, $p=r=2$, then $B^{s}_{2,2}=H^{s}$.
\end{definition}
\begin{lemma}\label{lem2.8} (Gagliardo-Nirenberg inequality, see \cite{12}) For $s>\frac{1}{2}$, there hold
$$
\|f\|_{L^{\infty}}\leq C_{s}(1+\|f\|_{B^{0}_{\infty,\infty}}log(e+\|f\|_{H^{s}}))
$$
and
$$
\|f\|_{B^{\frac{1}{2}}_{2,\infty}\cap L^{\infty}}\leq C_{s}(1+\|f\|_{B^{\frac{1}{2}}_{2,\infty}}log(e+\|f\|_{H^{s}})),
$$
where $C_{s}$ is a constant depending on $s$.
\end{lemma}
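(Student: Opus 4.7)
The plan is to prove both inequalities by a standard Littlewood-Paley frequency-splitting argument of Brezis-Gallouet-Wainger type. Using the decomposition $f=\sum_{j\geq -1}\Delta_j f$ from Lemma \ref{lem2.6}, I would start from the triangle inequality
\[
\|f\|_{L^\infty}\leq \sum_{j=-1}^{N}\|\Delta_j f\|_{L^\infty}+\sum_{j>N}\|\Delta_j f\|_{L^\infty}
\]
with a cut-off level $N\in\mathbb{N}$ to be chosen later to optimize the bound.

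For the low-frequency block the definition of the Besov norm gives directly $\|\Delta_j f\|_{L^\infty}\leq \|f\|_{B^{0}_{\infty,\infty}}$, so this part is bounded by $(N+2)\|f\|_{B^{0}_{\infty,\infty}}$. For the high-frequency block I would invoke Bernstein's inequality $\|\Delta_j f\|_{L^\infty}\lesssim 2^{j/2}\|\Delta_j f\|_{L^2}$, then factor $2^{j/2}=2^{j(1/2-s)}\cdot 2^{js}$ and apply Cauchy-Schwarz. Since $s>\frac{1}{2}$ the geometric sum $\sum_{j>N}2^{2j(1/2-s)}$ converges and one obtains
\[
\sum_{j>N}\|\Delta_j f\|_{L^\infty}\lesssim \frac{1}{\sqrt{2s-1}}\,2^{-N(s-1/2)}\|f\|_{H^s}.
\]
Choosing $N=\left\lceil \frac{1}{s-1/2}\log_{2}(e+\|f\|_{H^s})\right\rceil$ forces the high-frequency tail to be $O(1)$ while the low-frequency sum becomes $O\!\left(\log(e+\|f\|_{H^s})\,\|f\|_{B^{0}_{\infty,\infty}}\right)$, producing exactly the first claimed bound after absorbing constants into $C_s$.

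For the second inequality, the $B^{1/2}_{2,\infty}$-piece of the intersection norm is trivially dominated by the right-hand side because $\log(e+\|f\|_{H^s})\geq 1$, so only the $L^\infty$-piece requires work. The argument is identical except that in the low-frequency estimate I would use Bernstein first: $\|\Delta_j f\|_{L^\infty}\lesssim 2^{j/2}\|\Delta_j f\|_{L^2}\leq \|f\|_{B^{1/2}_{2,\infty}}$, so the low-frequency sum is controlled by $(N+2)\|f\|_{B^{1/2}_{2,\infty}}$; the high-frequency bound and the choice of $N$ are unchanged. The only real technical point, and hence the closest thing to an obstacle, is the balancing of $N$: the hypothesis $s>\frac{1}{2}$ is essential, since it is exactly what makes the high-frequency geometric series converge and what allows the optimal $N\sim \log \|f\|_{H^s}$ to deliver a logarithmic—rather than polynomial—dependence on $\|f\|_{H^s}$.
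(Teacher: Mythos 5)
Your argument is correct and complete. Note that the paper itself offers no proof of this lemma --- it is quoted from the reference of Chemin cited as \cite{12} --- so there is nothing internal to compare against; your high--low frequency splitting with the cut-off $N\sim \frac{1}{s-1/2}\log_2(e+\|f\|_{H^s})$, Bernstein's inequality on the high modes, and Cauchy--Schwarz against the $H^s$ norm is precisely the standard Brezis--Gallouet--Wainger argument that underlies the cited result, and all the details (the role of $s>\tfrac12$ in summing the geometric tail, the use of $\log(e+x)\geq 1$ to absorb the $B^{1/2}_{2,\infty}$ piece of the intersection norm) are handled correctly.
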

\begin{lemma}\label{lem2.9} (see Lemma 2.100 in \cite{15})
 Let $\sigma \in R $, $ 1\leq r \leq \infty$, and $ 1\leq p \leq p_{1}\leq \infty$, let $v$ be a vector field
 over $R^{d}$. Assume that $\sigma > -d \min\{\frac{1}{p_{1}}, 1-\frac{1}{p}\}$,
 define $R_{j}=[v\cdot\nabla, \Delta_{j}]f$, there exists a constant $C$, depending continuously on $p, p_{1},\sigma$ and $d$, such that
 \begin{align}
 \|(2^{j\sigma}\|R_{j})\|_{L^{p}})_{j}\|_{l^{r}}\leq C\|\nabla v\|_{B_{p,\infty}^{\frac{d}{p}}\cap L^{\infty}}\|f\|_{B_{p,r}^{\sigma}}, ~~for~~ \sigma< 1+\frac{d}{p_{1}}. \nonumber
 \end{align}
 Further, if $\sigma > 0$ and $ \frac{1}{p_{2}}=\frac{1}{p}-\frac{1}{p_{1}}$, then
 \begin{align}
 \|(2^{j\sigma}\|R_{j})\|_{L^{p}})_{j}\|_{l^{r}}\leq C\|\nabla v\|_{L^{\infty}}\|f\|_{B_{p,r}^{\sigma}}+\|\nabla f\|_{L^{p_{2}}}\|\nabla v\|_{B_{p_{1},r}^{\sigma-1}}. \nonumber
 \end{align}
\end{lemma}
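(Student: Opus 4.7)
The plan is to base the argument on Bony's paradifferential decomposition, which converts the commutator into three structurally different pieces that can be estimated separately. Writing $v \cdot \nabla f = T_{v^{k}} \partial_{k} f + T_{\partial_{k} f} v^{k} + R(v^{k}, \partial_{k} f)$ (summation over $k$ understood), one obtains
\[
R_{j} = [v \cdot \nabla, \Delta_{j}] f = R_{j}^{(1)} + R_{j}^{(2)} + R_{j}^{(3)},
\]
where $R_{j}^{(1)} = [T_{v^{k}}, \Delta_{j}] \partial_{k} f$ is the diagonal paraproduct commutator, $R_{j}^{(2)} = T_{\partial_{k} \Delta_{j} f} v^{k} - \Delta_{j} T_{\partial_{k} f} v^{k}$ is the off-diagonal piece, and $R_{j}^{(3)} = R(v^{k}, \partial_{k} \Delta_{j} f) - \Delta_{j} R(v^{k}, \partial_{k} f)$ is the remainder piece.

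For $R_{j}^{(1)}$ I would expand the commutator into a finite sum $\sum_{|j'-j| \leq N_{0}} [S_{j'-1} v^{k}, \Delta_{j}] \partial_{k} \Delta_{j'} f$, realize $\Delta_{j}$ as convolution with $2^{jd} h(2^{j} \cdot)$, and perform a first-order Taylor expansion of $S_{j'-1} v^{k}$ inside the convolution. The resulting integrand loses one power of $2^{j}$, so the gain of $2^{-j}$ exactly compensates $\partial_{k}$, producing a bound $\|R_{j}^{(1)}\|_{L^{p}} \lesssim 2^{-j\sigma} c_{j} \|\nabla v\|_{L^{\infty}} \|f\|_{B_{p,r}^{\sigma}}$ with $(c_{j}) \in \ell^{r}$. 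The pieces $R_{j}^{(2)}$ and $R_{j}^{(3)}$ are controlled by pure frequency-support considerations: the Fourier support of $T_{\partial_{k} f} v^{k}$ lives in dyadic rings so that only finitely many $j'$ contribute, and a direct paraproduct estimate reduces matters to bounding $\|\Delta_{j'} v\|_{L^{p_{1}}}$ by $\|\nabla v\|_{B_{p_{1}, \infty}^{d/p_{1}}}$ via Bernstein; the remainder piece is summed in $j'$ using $\sigma < 1 + d/p_{1}$ to ensure convergence. The lower bound $\sigma > -d \min\{1/p_{1}, 1 - 1/p\}$ is exactly what makes the paraproduct operators act continuously between the relevant Besov spaces.

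For the second statement, valid when $\sigma > 0$, I would redistribute one derivative: commute $\nabla$ through Bony's decomposition so that $\nabla f$ appears in $L^{p_{2}}$ (via H\"older with $1/p_{2} = 1/p - 1/p_{1}$) while $\nabla v$ appears one derivative weaker, in $B_{p_{1}, r}^{\sigma - 1}$; positivity of $\sigma$ is precisely what is needed to pay for shifting a derivative from $f$ onto $v$. The main obstacle throughout is the bookkeeping: extracting the $2^{-j}$ gain from $[T_{v^{k}}, \Delta_{j}]$ rigorously, and checking at every step that $\sigma$ stays inside the prescribed window so that both the paraproduct pieces and the remainder series remain summable in $\ell^{r}$ with the correct weights.
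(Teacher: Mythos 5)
The paper does not prove this lemma at all---it is quoted verbatim from Lemma 2.100 of Bahouri--Chemin--Danchin \cite{15}---so the only meaningful comparison is with the proof in that reference, and your outline reproduces it faithfully: the Bony splitting of $[v\cdot\nabla,\Delta_j]f$ into the paraproduct commutator $[T_{v^k},\Delta_j]\partial_k f$, the off-diagonal and remainder pieces, the first-order Taylor expansion of the convolution kernel to extract the $2^{-j}$ gain that absorbs $\partial_k$, and the role of the two conditions on $\sigma$ in making the paraproduct and remainder series summable. Your approach is correct and is essentially the standard argument behind the cited result; no discrepancy with the paper to report.
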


Different from the energy estimate obtained in \cite{5}, here we obtain a more refined priori estimate for solutions $(u,\eta)$ to FW system (\ref{e12}) with the help of techniques developed in Besov space.
\begin{lemma}\label{lem2.10}
Assume that $(u, \eta)\in H^{s}\times H^{s-1}$ with $\frac{1}{2}< s-1< \frac{3}{2}$ is the smooth solution to $(\ref{e12})$, then
 \begin{equation}\label{e21}
 \frac{d}{dt}(\|u\|_{H^{s}}+\|\eta\|_{H^{s-1}})\leq
 C(\|\partial_{x}u\|_{B_{2,\infty}^{\frac{1}{2}}\cap L^{\infty}}+\|\eta\|_{\infty}+1)(\|u\|_{H^{s}}+\|\eta\|_{H^{s-1}}).
\end{equation}
\end{lemma}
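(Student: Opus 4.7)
The strategy is standard Littlewood--Paley energy estimates for transport-type equations, done carefully so that the commutator losses are controlled by the quantities appearing on the right-hand side. The key subtlety is that for the $u$-equation the regularity index $s$ lies in $(3/2,5/2)$, which exceeds the range of the ``clean'' commutator inequality in Lemma~\ref{lem2.9}; I have to switch to the second form there.

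\textbf{Step 1: Localize both equations.} Applying $\Delta_j$ to \eqref{e12} and writing each equation in transport form gives
\begin{equation*}
\partial_t \Delta_j u + u\,\partial_x \Delta_j u = -[\Delta_j,u]\partial_x u + \Delta_j\partial_x\Lambda^{-2}(\eta-u),
\end{equation*}
\begin{equation*}
\partial_t \Delta_j \eta + u\,\partial_x \Delta_j \eta = -[\Delta_j,u]\partial_x \eta - \Delta_j(\eta\,\partial_x u).
\end{equation*}
Taking the $L^2$ inner product with $\Delta_j u$ (resp.\ $\Delta_j\eta$), integrating by parts on the transport term to replace $\int u\,\partial_x \Delta_j f\cdot \Delta_j f$ by $\tfrac{1}{2}\int(\partial_x u)(\Delta_j f)^2$, and dividing by $\|\Delta_j f\|_{L^2}$, I obtain
\begin{equation*}
\tfrac{d}{dt}\|\Delta_j u\|_{L^2} \lesssim \|\partial_x u\|_{L^\infty}\|\Delta_j u\|_{L^2} + \|[\Delta_j,u]\partial_x u\|_{L^2} + \|\Delta_j\partial_x\Lambda^{-2}(\eta-u)\|_{L^2},
\end{equation*}
and similarly for $\Delta_j\eta$.

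\textbf{Step 2: Sum in $j$ with the right weights.} Multiplying by $2^{js}$ (resp.\ $2^{j(s-1)}$) and taking the $\ell^2$-norm in $j$ converts the left-hand sides into $\tfrac{d}{dt}\|u\|_{H^s}$ and $\tfrac{d}{dt}\|\eta\|_{H^{s-1}}$. The nonlocal term for $u$ is harmless: $\partial_x\Lambda^{-2}$ gains one derivative, so $\|\partial_x\Lambda^{-2}(\eta-u)\|_{H^s}\lesssim \|\eta\|_{H^{s-1}}+\|u\|_{H^s}$. For the $\eta$-equation the product $\eta\,\partial_x u$ is handled by Lemma~\ref{lem2.4} (since $s-1>1/2$), giving $\|\eta\,\partial_x u\|_{H^{s-1}}\lesssim \|\eta\|_{L^\infty}\|u\|_{H^s}+\|\eta\|_{H^{s-1}}\|\partial_x u\|_{L^\infty}$, which is exactly of the required form.

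\textbf{Step 3: The two commutator bounds.} This is the crux. For the $\eta$-equation, $\sigma=s-1\in(1/2,3/2)$ satisfies $\sigma<1+1/p_1$ with $p=p_1=r=2$, so the \emph{first} inequality of Lemma~\ref{lem2.9} applies with $v=u$, $f=\eta$ and delivers
\begin{equation*}
\bigl\|\bigl(2^{j(s-1)}\|[\Delta_j,u]\partial_x\eta\|_{L^2}\bigr)_j\bigr\|_{\ell^2} \lesssim \|\partial_x u\|_{B^{1/2}_{2,\infty}\cap L^\infty}\|\eta\|_{H^{s-1}}.
\end{equation*}
This is the source of the $B^{1/2}_{2,\infty}\cap L^\infty$ factor in \eqref{e21}. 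For the $u$-equation, however, $\sigma=s\in(3/2,5/2)$ violates $\sigma<3/2$, so the first inequality is unavailable; I instead invoke the \emph{second} inequality of Lemma~\ref{lem2.9} with $v=f=u$, $p=p_1=r=2$, $p_2=\infty$, obtaining
\begin{equation*}
\bigl\|\bigl(2^{js}\|[\Delta_j,u]\partial_x u\|_{L^2}\bigr)_j\bigr\|_{\ell^2} \lesssim \|\partial_x u\|_{L^\infty}\|u\|_{H^s} + \|\partial_x u\|_{L^\infty}\|\partial_x u\|_{H^{s-1}} \lesssim \|\partial_x u\|_{L^\infty}\|u\|_{H^s},
\end{equation*}
which is already dominated by the right-hand side of \eqref{e21}.

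\textbf{Step 4: Add and conclude.} Combining the two $\ell^2$-summed estimates yields
\begin{equation*}
\tfrac{d}{dt}\|u\|_{H^s}\lesssim(\|\partial_x u\|_{L^\infty}+1)(\|u\|_{H^s}+\|\eta\|_{H^{s-1}}),
\end{equation*}
\begin{equation*}
\tfrac{d}{dt}\|\eta\|_{H^{s-1}}\lesssim(\|\partial_x u\|_{B^{1/2}_{2,\infty}\cap L^\infty}+\|\eta\|_{L^\infty})(\|u\|_{H^s}+\|\eta\|_{H^{s-1}}),
\end{equation*}
and summing gives \eqref{e21}, after absorbing $\|\partial_x u\|_{L^\infty}$ into $\|\partial_x u\|_{B^{1/2}_{2,\infty}\cap L^\infty}$. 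The main obstacle is genuinely Step~3: one has to notice that the two components live in different regularity regimes relative to the threshold $\sigma=3/2$ of Lemma~\ref{lem2.9}, and pick the correct form of the commutator estimate for each — using the sharper first form where it is legal (on $\eta$) to generate the Besov factor, and falling back on the cruder second form for $u$, where it is luckily sufficient because $\|\partial_x u\|_{H^{s-1}}\leq \|u\|_{H^s}$ is absorbed.
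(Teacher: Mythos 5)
Your proof is correct and follows essentially the same route as the paper: Littlewood--Paley localization of both equations into transport form, the commutator estimate of Lemma~\ref{lem2.9}, the product estimate of Lemma~\ref{lem2.4} for $\eta\,\partial_x u$, and summation of the two resulting differential inequalities. The only substantive difference is that your Step~3 makes explicit the case distinction between the two forms of Lemma~\ref{lem2.9} (the first form for the $\eta$-equation where $\sigma=s-1<3/2$, the second form for the $u$-equation where $\sigma=s>3/2$), a point the paper glosses over with ``similar process carried out on the first equation''; this is a welcome clarification rather than a deviation.
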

\begin{proof}
 Applying the localization operator to $(\ref{e12})$, we transform the FW system into the following system
  \begin{equation}\label{e22}
 \left\{ \begin{array}{ll}
 \partial_{t}\Delta_{j}u+u\partial_{x}\Delta_{j}u=[u\partial_{x}, \Delta_{j}]u+\partial_{x}\Lambda^{-2}(\Delta_{j}\eta-\Delta_{j}u), \\
 \partial_{t}\Delta_{j}\eta+u\partial_{x}\Delta_{j}\eta = [u\partial_{x}, \Delta_{j}]\eta-\Delta_{q}(\eta\partial_{x}u), \\
 \Delta_{j}u\mid_{t=0}= \Delta_{j}u_{0}, ~~\Delta_{j}\eta\mid_{t=0}=\Delta_{j}\eta_{0},
 \end{array} \right.
 \end{equation}
along the flow of $u$. Multiplying both sides of the second equation in (\ref{e22}) by $\Delta_{j}\eta$, integrating over $R$ with respect to $x$ and using the Lemma \ref{lem2.9}, we have
\begin{equation}
\begin{aligned}
 \frac{1}{2}\frac{d}{dt}\|\Delta_{j}\eta\|_{2}^{2}&=-\int_{R}u\partial_{x}\Delta_{j}\eta\Delta_{j}\eta dx
 +\int_{R}[u\partial_{x}, \Delta_{j}]\eta \Delta_{j}\eta dx -\int_{R}\Delta_{j}(\eta\partial_{x}u)\Delta_{j}\eta dx  \\ \nonumber
 & \leq\|\partial_{x}u\|_{\infty}\|\Delta_{j}\eta\|_{2}^{2}+\|[u\partial_{x}, \Delta_{j}]\eta\|_{2}\|\Delta_{j}\eta\|_{2}+\|\Delta_{j}(\eta\partial_{x}u)\|_{2}\|\Delta_{j}\eta\|_{2}\\ \nonumber &\leq\|\partial_{x}u\|_{\infty}\|\Delta_{j}\eta\|_{2}^{2}+ 2^{-j(s-1)}c_{j}\|\eta\|_{H^{s-1}}\|\partial_{x}u\|_{B_{2,\infty}^{\frac{1}{2}}\cap L^{\infty}}\|\Delta_{j}\eta\|_{2} \\ \nonumber
 &+\|\Delta_{j}(\eta\partial_{x}u)\|_{2}\|\Delta_{j}\eta\|_{2},
 \end{aligned}
\end{equation}
where $c_{j}\in l^{2}$, which implies
 \begin{equation}\label{e23}
 \frac{d}{dt}\|\Delta_{j}\eta\|_{2}\leq\|\partial_{x}u\|_{\infty}\|\Delta_{j}\eta\|_{2}+ 2^{-j(s-1)}c_{j}\|\eta\|_{H^{s-1}}\|\partial_{x}u\|_{B_{2,\infty}^{\frac{1}{2}}\cap L^{\infty}}+
 \|\Delta_{j}(\eta\partial_{x}u)\|_{2}
 \end{equation}
Multiplying (\ref{e23}) by $2^{j(s-1)}$ and taking the $l^{2}$ norm over $j$, we obtain
 \begin{align}\label{e24}
 \frac{d}{dt}\|\eta\|_{H^{s-1}}&\leq\|\partial_{x}u\|_{\infty}\|\eta\|_{H^{s-1}}+
 \|\eta\partial_{x}u\|_{H^{s-1}} \\ \nonumber
 &\leq C(\|\partial_{x}u\|_{B_{2,\infty}^{\frac{1}{2}}\cap L^{\infty}}+\|\eta\|_{\infty})(\|\eta\|_{H^{s-1}}+\|u\|_{H^{s}}),
 \end{align}
where we use the Lemma \ref{lem2.4}. Similar process carried out on the first equation in (\ref{e22}), we get
 \begin{equation}\label{e25}
 \frac{d}{dt}\|u\|_{H^{s}}\leq C\|\partial_{x}u\|_{\infty}\|u\|_{H^{s}}+\|\eta\|_{H^{s-1}}+\|u\|_{H^{s}}.
 \end{equation}
 Adding (\ref{e24}) to (\ref{e25}), we attain
$$
 \frac{d}{dt}(\|\eta\|_{H^{s-1}}+\|u\|_{H^{s}})\leq C(\|\partial_{x}u\|_{B_{2,\infty}^{\frac{1}{2}}\cap L^{\infty}}+\|\eta\|_{\infty}+1)(\|\eta\|_{H^{s-1}}+\|u\|_{H^{s}}).
$$
\end{proof}
\begin{remark}\label{rem2.11}
Define the energy
$$
y(t)=E_{s}(u,\eta)=:\|u(t)\|_{H^{s}}+\|\eta(t)\|_{H^{s-1}}
$$
with $y(0)=\|u_{0}\|_{H^{s}}+\|\eta_{0}\|_{H^{s-1}}$ with $s>\frac{3}{2}$. Based on the local well-posedness result in \cite{5}, we have
$$
\frac{dy(t)}{dt}\leq C(y(t)+y^{2}(t)),
$$
which implies that
$$
-\frac{d(y^{-1}+1)}{dt}\leq C(y^{-1}+1).
$$
Then we can obtain
$$
y\leq \frac{1}{e^{-Ct}(y_{0}^{-1}+1)-1}.
$$
Let $\widetilde{T}:=\frac{1}{2C}ln(1+\frac{1}{y_{0}})$, then the solution $(u,\eta)$ exists for $t\in[0,\widetilde{T}]$ and there holds
$$
y(t)\leq \frac{1}{e^{-C\widetilde{T}}(y_{0}^{-1}+1)-1}\leq 2e^{C\widetilde{T}}y_{0},
$$
that is to say,
\begin{equation}\label{e26}
\|u(t)\|_{H^{s}}+\|\eta(t)\|_{H^{s-1}}\leq 2e^{C\widetilde{T}}(\|u_{0}\|_{H^{s}}+\|\eta_{0}\|_{H^{s-1}}).
\end{equation}
\end{remark}
\section{Continuity properties of the data-to-solution map}

\subsection{Nonuniform continuity}
In view of the local well-posedness result in \cite{5}, it's know that the solution $(u,\eta)$ of FW system continuously relies on its its
data $(u_{0},\eta_{0})$ in $H^{s}(R)\times H^{s-1}(R)$ with $s>\frac{3}{2}$. In this subsection, we aim to establish that the dependence on the
initial data is sharp.
\begin{theorem}\label{thm3.1}(Non-uniform continuity of data-to-solution map)
If $s>\frac{3}{2}$, the data-to-solution map $(u_{0},\eta_{0})\mapsto (u(t), \eta(t))$ for the Cauchy problem of FW system (\ref{e12}) is not uniformly
continuous from any bounded subset of $H^{s}(R)\times H^{s-1}(R)$ into $C([0,T); H^{s}(R)\times H^{s-1}(R))$.
\end{theorem}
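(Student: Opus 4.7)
The plan is to adapt the two-scale approximate-solutions method of Koch--Tzvetkov and Himonas--Kenig to the coupled system (\ref{e12}). I would build two families $(u^{\omega,n},\eta^{\omega,n})$ indexed by $\omega\in\{-1,+1\}$ and large $n\in\mathbb{N}$, each the sum of a low-frequency piece of amplitude $n^{-1}$ and a high-frequency oscillation at frequency $n$ of amplitude $n^{-s}$ (respectively $n^{-(s-1)}$ for $\eta$), modulated by a Schwartz bump rescaled to width $n^{\delta}$. Concretely, an ansatz in the spirit of \cite{2,4,8} is
\begin{align*}
u^{\omega,n}(x,t) &= \omega n^{-1}\phi(x/n^{\delta}) + n^{-s}\phi(x/n^{\delta})\cos(nx-\omega t), \\
\eta^{\omega,n}(x,t) &= \omega n^{-1}\phi(x/n^{\delta}) + n^{-(s-1)}\psi(x/n^{\delta})\cos(nx-\omega t+\theta),
\end{align*}
with a second Schwartz bump $\psi$, a phase $\theta\in\mathbb{R}$, and $0<\delta<1$ chosen so that the coupling through $\partial_{x}\La^{-2}(\eta-u)$ and $(\eta u)_{x}$ does not produce an uncancelled leading-order term at frequency $n$. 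By Lemma \ref{lem2.1}, the $H^{s}\times H^{s-1}$ norm of each $(u^{\omega,n}(\cdot,0),\eta^{\omega,n}(\cdot,0))$ is uniformly bounded in $n$, while the distance between the $\omega=\pm 1$ initial data collapses to the purely low-frequency discrepancy $2n^{-1}\phi(x/n^{\delta})$, whose $H^{s}$-norm is $O(n^{\delta/2-1})\to 0$.

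Next, I would substitute this ansatz into (\ref{e12}) and control the residuals
\begin{align*}
F^{\omega,n} &:= \partial_{t}u^{\omega,n} + u^{\omega,n}\partial_{x}u^{\omega,n} - \partial_{x}\La^{-2}(\eta^{\omega,n}-u^{\omega,n}), \\
G^{\omega,n} &:= \partial_{t}\eta^{\omega,n} + \partial_{x}(\eta^{\omega,n}u^{\omega,n}).
\end{align*}
The leading high-frequency contribution from $\partial_{t}u^{\omega,n}$ cancels against the transport term $\omega n^{-1}\partial_{x}u^{\omega,n}$ because of the choice of phase $nx-\omega t$; the nonlocal term $\partial_{x}\La^{-2}$ gains two derivatives and is negligible at frequency $n$. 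The genuinely two-scale remainders (cross-terms between low- and high-frequency components, and self-interactions shifting mass to frequencies $0$ and $2n$) should satisfy $\|F^{\omega,n}\|_{H^{s}}+\|G^{\omega,n}\|_{H^{s-1}}\lesssim n^{-\varepsilon}$ on a common interval $[0,T]$, provided $\delta$ is chosen small enough. Letting $(u_{\omega,n},\eta_{\omega,n})$ denote the actual solution of (\ref{e12}) issued from $(u^{\omega,n}(\cdot,0),\eta^{\omega,n}(\cdot,0))$, I would then apply Lemma \ref{lem2.10} to the differences $v:=u_{\omega,n}-u^{\omega,n}$, $\zeta:=\eta_{\omega,n}-\eta^{\omega,n}$ (which satisfy (\ref{e12}) up to the inhomogeneity $(F^{\omega,n},G^{\omega,n})$), and invoke Remark \ref{rem2.11} to secure a lifespan $T>0$ independent of $n$; a Gronwall argument then yields $\|v(t)\|_{H^{s}}+\|\zeta(t)\|_{H^{s-1}}\lesssim n^{-\varepsilon}$ on $[0,T]$.

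To finish, at any fixed $t\in(0,T]$ with $\sin t\neq 0$, the identity $\cos(nx-t)-\cos(nx+t)=2\sin(nx)\sin t$ together with Lemma \ref{lem2.1} gives
\begin{equation*}
\liminf_{n\to\infty}\bigl(\|u^{1,n}(\cdot,t)-u^{-1,n}(\cdot,t)\|_{H^{s}}+\|\eta^{1,n}(\cdot,t)-\eta^{-1,n}(\cdot,t)\|_{H^{s-1}}\bigr) \gtrsim |\sin t|,
\end{equation*}
while the initial distance goes to $0$; combined with the previous error bound this contradicts uniform continuity. The main obstacle is the second step: the coupling between $u$ and $\eta$ through $(\eta u)_{x}$ and the nonlocal source $\partial_{x}\La^{-2}(\eta-u)$ forces a delicate matching of the amplitude and phase of the $\eta$-ansatz to those of the $u$-ansatz so that no leading-order term survives at frequency $n$. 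This is precisely the point that, as the authors remark, prevents a direct import of the CH or rotation-two-component CH constructions.
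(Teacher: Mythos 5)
Your outline follows the same Koch--Tzvetkov/Himonas--Kenig architecture as the paper, but it stops exactly at the point where the real work lies, and two of your intermediate claims do not hold as stated. First, the $\eta$-ansatz. You include a high-frequency term $n^{-(s-1)}\psi(x/n^{\delta})\cos(nx-\omega t+\theta)$ with an undetermined phase $\theta$ and yourself identify the choice of $\theta$ and amplitude as the main unresolved obstacle. The paper's resolution is to drop this term entirely: it takes $\eta^{\alpha,n}=\frac{\alpha}{n}\psi(x/n^{\delta})$, purely low-frequency. This works because the coupling of $\eta$ into the $u$-equation passes through the smoothing operator $\partial_{x}\Lambda^{-2}$, so there is no leading-order term at frequency $n$ that an oscillatory part of $\eta$ would be needed to cancel, while the $\eta$-equation is pure transport by $u$, so the only high-frequency residual in the second equation is $\partial_{x}$ of (low-frequency $\eta$)$\times$(high-frequency $u$), of size $n^{-s-1+s_{1}}$ in $H^{s_{1}-1}$. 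The decisive cancellation in the first equation instead comes from choosing the low-frequency bump $\psi$ equal to $1$ on the support of the high-frequency modulator $\phi$, so that $\partial_{t}u^{\alpha,n}$ cancels against $\frac{\alpha}{n}\psi\,\partial_{x}(n^{-s-\frac{\delta}{2}}\phi\cos(nx-\alpha t))$. A correct proof must make such a choice explicitly; as written your ansatz leaves the key cancellation unverified. Second, a normalization slip: with the rescaled bump, Lemma \ref{lem2.1} gives $\|\phi(x/n^{\delta})\cos(nx-c)\|_{H^{s}}\approx n^{s+\delta/2}$, so the high-frequency amplitude must be $n^{-s-\frac{\delta}{2}}$, not $n^{-s}$; with your amplitudes the initial data grow like $n^{\delta/2}$ in $H^{s}$ rather than being uniformly bounded.

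The other gap is in passing from approximate to actual solutions. You propose to "apply Lemma \ref{lem2.10} to the differences" and run Gronwall directly in $H^{s}\times H^{s-1}$. Lemma \ref{lem2.10} is an a priori estimate for solutions of the system, not for the inhomogeneous difference system; and an energy estimate for the difference at the top regularity $s$ would require a commutator bound $\|[\Lambda^{s}\partial_{x},\,u^{\alpha,n}+u_{\alpha,n}]\omega\|_{L^{2}}$, which Lemma \ref{lem2.3} only furnishes for exponents $k$ with $k+1\leq s$, i.e.\ $k\leq s-1$. The paper therefore estimates the difference in $H^{s_{1}}\times H^{s_{1}-1}$ with $s_{1}\leq s-1$ (where the residuals are $O(n^{-s-\varepsilon+s_{1}})$ and the commutator lemma applies), obtains a crude $O(n^{s_{2}-s})$ bound in $H^{s_{2}}$ for some $s_{2}>s$ from the a priori estimate (\ref{e26}), and interpolates via Lemma \ref{lem2.2} to get decay in $H^{s}\times H^{s-1}$. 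Without this low-norm-plus-interpolation step (or a Kato--Ponce commutator at level $s$, which is not among the stated lemmas), your Step 2 does not close. Your final lower bound via $\cos(nx-t)-\cos(nx+t)=2\sin(nx)\sin t$ is fine and parallels the paper's computation with $\alpha\in\{0,1\}$, which yields $2|\sin(t/2)|$.
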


Here we would employ the method of approximate solutions introduced in \cite{1,2}. The key idea of the method is to show that there exists a two-parameter
family of actual solutions $(u_{\alpha,n}(x,t),\eta_{\alpha,n}(x,t))\in C([0,T);H^{s}(R)\times H^{s-1}(R))$ with $\alpha=0,1$ and $n\geq 1$ such that
\begin{equation}\label{e31}
\lim_{n\rightarrow\infty}(\|u_{1,n}(t)\|_{H^{s}}+\|u_{0,n}(t)\|_{H^{s}}+\|\eta_{1,n}(t)\|_{H^{s-1}}+\|\eta_{0,n}(t)\|_{H^{s-1}})\leq C,
\end{equation}
\begin{equation}\label{e32}
\lim_{n\rightarrow\infty}(\|u_{1,n}(0)-u_{0,n}(0)\|_{H^{s}}+\|\eta_{1,n}(0)-\eta_{0,n}(0)\|_{H^{s-1}})\rightarrow 0,
\end{equation}
\begin{equation}\label{e33}
\lim_{n\rightarrow\infty}(\|u_{1,n}(t)-u_{0,n}(t)\|_{H^{s}}+\|\eta_{1,n}(t)-\eta_{0,n}(t)\|_{H^{s-1}})\geq 2|\sin\frac{t}{2}|
\end{equation}
hold for all $t<T$, where $T$ is the lifespan of solutions. To achieve there aims, we divide the proof into following two steps. Namely, in the first step we will construct the approximate solutions
$(u^{\alpha,n}(x,t),\eta^{\alpha,n}(x,t))$ and show that the approximate solutions are indeed approximations to the actual solutions. In the second step,
we shall establish (\ref{e31})-(\ref{e33}) by using the properties of approximate solutions.
\begin{proof}
{\bf Step 1:}
Inspired by \cite{2}, we first construct two two-parameter approximate solutions $(u^{\alpha, n}(x,t), \eta^{\alpha,n}(x,t))$ with $\alpha=0,1$ and $n\geq
1$ by
\begin{equation}\label{e34}
u^{\alpha,n}(x,t)=\frac{\alpha}{n}\psi(\frac{x}{n^{\delta}})+n^{-s-\frac{\delta}{2}}\phi(\frac{x}{n^{\delta}})\cos(nx-\alpha
t),~~\eta^{\alpha,n}(x,t)=\frac{\alpha}{n}\psi(\frac{x}{n^{\delta}}),
\end{equation}
where $\phi, \psi\in C^{\infty}_{0}(R)$ are two cut-off function satisfying
\begin{equation}\label{e35}
\phi(x)=\left\{ \begin{array}{ll}
 1, & |x|<1, \\
 0,  & |x|\geq 2.
\end{array} \right.~~~
\psi(x)=\left\{ \begin{array}{ll}
 1, & |x|<2, \\
 0,  & |x|\geq 3.
\end{array} \right.
\end{equation}
From Lemma \ref{lem2.1}, we have that for any $r\geq 0$
\begin{equation}\label{e36}
\|\phi(\frac{x}{n^{\delta}})\cos(nx-\alpha t)\|_{H^{r}(R)}\approx n^{\frac{\delta}{2}+r},~~\|\phi(\frac{x}{n^{\delta}})\sin(nx-\alpha
t)\|_{H^{r}(R)}\approx n^{\frac{\delta}{2}+r}.
\end{equation}
In addition, for any $r\geq 0$ and $\widetilde{\phi}\in C^{\infty}_{0}(R)$, it's easy to see
\begin{equation}\label{e37}
\|\widetilde{\phi}(\frac{x}{n^{\delta}})\|_{H^{r}(R)}\leq n^{\frac{\delta}{2}}\|\widetilde{\phi}\|_{H^{r}(R)}. \end{equation}

Now let's estimate the errors and show the approximate solutions are indeed approximations to the actual solutions. Substituting $(u^{\alpha,n}(x,t),
\eta^{\alpha,n}(x,t))$ into the FW system (\ref{e12}), we get the following errors for the approximate solutions
\begin{equation}\label{e38}
E=\partial_{t} u^{\alpha,n}+u^{\alpha,n}\partial_{x}u^{\alpha,n}-\partial_{x}\Lambda^{-2}(\eta^{\alpha,n}-u^{\alpha,n}):=E_{1}-E_{2}
\end{equation}
and
\begin{equation}\label{e39}
F=\partial_{t} \eta^{\alpha,n}+\partial_{x}(\eta^{\alpha,n}u^{\alpha,n}).
\end{equation}
Moreover, we can obtain the following error estimates.
\begin{lemma}\label{lem3.2}
Assume $s>\frac{3}{2}$ and $\frac{1}{2}<\delta<1$, then there exists $s_{1}\leq s-1$ and $\varepsilon>0$ such that
\begin{equation}\label{e310}
\|E\|_{H^{s_{1}}}\lesssim n^{-s-\varepsilon+s_{1}}, \quad \|F\|_{H^{s_{1}-1}}\lesssim n^{-s-\varepsilon+s_{1}}~~for ~~0\leq t<T.
\end{equation}
\end{lemma}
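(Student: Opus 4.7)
My plan is to substitute the ansatz \eqref{e34} directly into \eqref{e38}--\eqref{e39}, exploit a designed cancellation that kills the leading oscillatory contribution, and then bound the residuals with Lemma \ref{lem2.1} together with the smoothing gain of $\Lambda^{-2}$.

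Concretely, a direct computation gives $\partial_{t}u^{\alpha,n} = \alpha\, n^{-s-\delta/2}\phi(x/n^{\delta})\sin(nx-\alpha t)$ and
\[
\partial_{x}u^{\alpha,n} = \tfrac{\alpha}{n^{1+\delta}}\psi'(x/n^{\delta}) + n^{-s-\delta/2-\delta}\phi'(x/n^{\delta})\cos(nx-\alpha t) - n^{1-s-\delta/2}\phi(x/n^{\delta})\sin(nx-\alpha t).
\]
The key algebraic point is that, by \eqref{e35}, $\psi \equiv 1$ on $\mathrm{supp}\,\phi$, so $\psi\phi = \phi$, and the product of $\tfrac{\alpha}{n}\psi(x/n^{\delta})$ with the last summand of $\partial_{x}u^{\alpha,n}$ reduces to $-\alpha\, n^{-s-\delta/2}\phi(x/n^{\delta})\sin(nx-\alpha t)$, exactly cancelling $\partial_{t}u^{\alpha,n}$. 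This is precisely why the phase was chosen to be $nx - \alpha t$: $\omega = \alpha$ is the transport frequency of a wave of wavenumber $n$ riding on a background velocity $\alpha/n$. In addition, $\mathrm{supp}\,\phi \cap \mathrm{supp}\,\psi' = \emptyset$ by \eqref{e35}, so $\phi\psi' \equiv 0$, which removes another cross term automatically.

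After these cancellations, $E_{1}$ consists of four explicit residuals, each losing at least one power of $n^{-\delta}$ or $n^{-1}$ compared to the cancelled leading term: a purely low-frequency piece $\tfrac{\alpha^{2}}{n^{2+\delta}}\psi\psi'(x/n^{\delta})$, a mixed slow/fast piece of the form $n^{-1-s-\delta/2-\delta}\phi'(x/n^{\delta})\cos(nx-\alpha t)$, a self-interaction $n^{-2s-\delta-\delta/2}\phi\phi'(x/n^{\delta})\cos^{2}(nx-\alpha t)$, and a doubled-frequency term $\sim n^{1-2s-\delta}\phi^{2}(x/n^{\delta})\sin(2nx-2\alpha t)$. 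Using Lemma \ref{lem2.1} to read off $\|\Phi(x/n^{\delta})\cos(knx-c)\|_{H^{s_{1}}}\asymp n^{\delta/2+s_{1}}$, together with \eqref{e37} for pure envelopes, each residual is bounded by $n^{-s-\varepsilon+s_{1}}$ provided $\varepsilon < s + \delta/2 - 1$, which is positive since $s > 3/2$ and $\delta > 1/2$. The remaining contribution $E_{2} = -\partial_{x}\Lambda^{-2}\bigl(n^{-s-\delta/2}\phi(x/n^{\delta})\cos(nx-\alpha t)\bigr)$ is controlled by observing that the Fourier multiplier $i\xi/(1+\xi^{2})$ produces a gain of $n^{-1}$ on the frequency support (concentrated near $\pm n$), so $\|E_{2}\|_{H^{s_{1}}}\lesssim n^{-s+s_{1}-1}$.

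For $F$, the time derivative vanishes because $\eta^{\alpha,n}$ is $t$-independent. Using $\psi\phi = \phi$ once more, $\eta^{\alpha,n}u^{\alpha,n} = (\alpha/n)^{2}\psi^{2}(x/n^{\delta}) + \alpha\, n^{-1-s-\delta/2}\phi(x/n^{\delta})\cos(nx-\alpha t)$, and its $x$-derivative has dominant term $-\alpha\, n^{-s-\delta/2}\phi(x/n^{\delta})\sin(nx-\alpha t)$ of $H^{s_{1}-1}$ norm $\asymp n^{s_{1}-s-1}$; all other pieces are strictly smaller. Picking, for instance, $s_{1} = s-1$ and any $\varepsilon \in (0, \min\{1, s+\delta/2 - 1\})$ yields both inequalities in \eqref{e310}. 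The main obstacle is not any individual estimate but the bookkeeping: half a dozen residual terms arise once the nonlinearities are expanded, the admissible $\varepsilon$ is dictated by the worst of them, and one must track every power of $n$ carefully while verifying that the support restrictions in \eqref{e35} genuinely eliminate the dangerous oscillatory terms.
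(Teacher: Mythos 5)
Your proof follows essentially the same route as the paper's: direct substitution of the ansatz, the cancellation of $\partial_t u^{\alpha,n}$ against $\frac{\alpha}{n}\psi\cdot(-n^{1-s-\delta/2}\phi\sin(nx-\alpha t))$ via $\psi\phi=\phi$ (and $\phi\psi'\equiv 0$), then term-by-term bounds of the residuals using Lemma \ref{lem2.1}, the scaling estimate \eqref{e37}, and the $n^{-1}$ gain from $\partial_x\Lambda^{-2}$ at frequency $n$, ending with $s_1=s-1$. The only blemish is a harmless exponent slip in the self-interaction term (its coefficient is $n^{-2s-2\delta}$, not $n^{-2s-\delta-\delta/2}$), which overestimates that term and does not affect the conclusion.
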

\begin{proof}
From (\ref{e34}), (\ref{e35}) and the properties of trigonometric functions, we have
\begin{align}\label{e311}
E_{1}
 &=\partial_{t} u^{\alpha,n}+u^{\alpha,n}\partial_{x}u^{\alpha,n}\nonumber\\
 &=\alpha n^{-s-\frac{\delta}{2}}\phi(\frac{x}{n^{\delta}})\sin(nx-\alpha t) \nonumber\\
 &+\alpha^{2}n^{-2-\alpha}\psi(\frac{x}{n^{\delta}})\partial_{x}\psi(\frac{x}{n^{\delta}})+\alpha
 n^{-s-1-\frac{3\delta}{2}}\partial_{x}\phi(\frac{x}{n^{\delta}})\psi(\frac{x}{n^{\delta}})\cos(nx-\alpha t) \nonumber\\
 &-\alpha n^{-s-\frac{\delta}{2}}\psi(\frac{x}{n^{\delta}})\phi(\frac{x}{n^{\delta}})\sin(nx-\alpha t)+\alpha
 n^{-s-1-\frac{3\delta}{2}}\partial_{x}\psi(\frac{x}{n^{\delta}})\phi(\frac{x}{n^{\delta}})\cos(nx-\alpha t) \nonumber\\
 &+n^{-2s-2\delta}\phi(\frac{x}{n^{\delta}})\partial_{x}\phi(\frac{x}{n^{\delta}})\cos^{2}(nx-\alpha
 t)-n^{-2s-\delta+1}\phi^{2}(\frac{x}{n^{\delta}})\sin(nx-\alpha t)\cos(nx-\alpha t)\nonumber\\
 &=\alpha^{2}n^{-2-\alpha}\psi(\frac{x}{n^{\delta}})\partial_{x}\psi(\frac{x}{n^{\delta}})+\alpha
 n^{-s-1-\frac{3\delta}{2}}\partial_{x}\phi(\frac{x}{n^{\delta}})\psi(\frac{x}{n^{\delta}})\cos(nx-\alpha t) \nonumber\\
 &+n^{-2s-2\delta}\phi(\frac{x}{n^{\delta}})\partial_{x}\phi(\frac{x}{n^{\delta}})\cos^{2}(nx-\alpha
 t)-n^{-2s-\delta+1}\phi^{2}(\frac{x}{n^{\delta}})\sin(nx-\alpha t)\cos(nx-\alpha t) \nonumber\\
 &=\alpha^{2}n^{-2-\alpha}\psi(\frac{x}{n^{\delta}})\partial_{x}\psi(\frac{x}{n^{\delta}})+\alpha
 n^{-s-1-\frac{3\delta}{2}}\partial_{x}\phi(\frac{x}{n^{\delta}})\psi(\frac{x}{n^{\delta}})\cos(nx-\alpha t) \nonumber\\
 &+\frac{1}{2}n^{-2s-2\delta}\phi(\frac{x}{n^{\delta}})\partial_{x}\phi(\frac{x}{n^{\delta}})\cos(2nx-2\alpha t)+
 \frac{1}{2}n^{-2s-2\delta}\phi(\frac{x}{n^{\delta}})\partial_{x}\phi(\frac{x}{n^{\delta}}) \nonumber\\
 &-\frac{1}{2}n^{-2s-\delta+1}\phi^{2}(\frac{x}{n^{\delta}})\sin(2nx-2\alpha t).
\end{align}
Thus, (\ref{e36}), (\ref{e37}) and (\ref{e311}) yield
\begin{equation}\label{e312}
\|E_{1}\|_{H^{s_{1}}}\lesssim n^{-2}+n^{-s-1-\frac{\delta}{2}+s_{1}}+n^{-s-s-\delta+s_{1}}+n^{-2s-\delta}+n^{-s-s+1+s_{1}}.
\end{equation}
In addition, it's obvious that
\begin{align}\label{e313}
\|-E_{2}\|_{H^{s_{1}}}
 &=\|\partial_{x}\Lambda^{-2}(-\eta^{\alpha,n}+u^{\alpha,n})\|_{H^{s_{1}}} \nonumber\\
 &\leq \frac{2}{n^{1+\delta}}\|\Lambda^{-2}\partial_{x}\psi(\frac{x}{n^{\delta}})\|_{H^{s_{1}}}+n^{-s-\frac{\delta}{2}}\|\partial_{x}\Lambda^{-2}
 (\phi(\frac{x}{n^{\delta}})cos(nx-\alpha t))\|_{H^{s_{1}}} \nonumber\\
 &\lesssim n^{-1-\delta}\|\partial_{x}\psi(\frac{x}{n^{\delta}})\|_{H^{s_{1}-2}}+n^{-s-\frac{\delta}{2}}\|\phi(\frac{x}{n^{\delta}})cos(nx-\alpha t)\|_{H^{s_{1}-1}} \nonumber\\
 &\leq n^{-1-\delta}(\int_{R}|\xi|^{2}(1+|\xi|^{2})^{s_{1}-2}n^{2\delta}|\hat{\psi}(n^{\delta}\xi)|^{2}d\xi)^{\frac{1}{2}}+n^{-s-1+s_{1}} \nonumber\\
 &= n^{-1-\frac{3\delta}{2}}(\int_{R}|y|^{2}(1+n^{-2\delta}|y|^{2})^{s_{1}-2}|\hat{\psi}(y)|^{2}dy)^{\frac{1}{2}}+n^{-s-1+s_{1}} \nonumber\\
 &\leq n^{-1-\frac{3\delta}{2}}(\int_{R}|y|^{2}(1+|y|^{2})^{s_{1}-2}|\hat{\psi}(y)|^{2}dy)^{\frac{1}{2}}+n^{-s-1+s_{1}} \nonumber\\
 &=n^{-1-\frac{3\delta}{2}}\|\partial_{x}\psi\|_{H^{s_{1}-2}}+n^{-s-1+s_{1}} \nonumber\\
 &\lesssim n^{-1-\frac{3\delta}{2}}+n^{-s-1+s_{1}},
\end{align}
where we use (\ref{e36}), (\ref{e37}) and the fact $\frac{1}{2}<\delta<1$. From (\ref{e34})-(\ref{e37}), we also have
\begin{align}\label{e314}
\|F\|_{H^{s_{1}-1}}
 &=\|\partial_{x}(\alpha^{2}n^{-2}\psi^{2}(\frac{x}{n^{\delta}})+\alpha n^{-s-\frac{\delta}{2}-1}\psi(\frac{x}{n^{\delta}})\cos(nx-\alpha t)\phi(\frac{x}{n^{\delta}}))\|_{H^{s_{1}-1}} \nonumber\\
 &\leq \|\alpha^{2}n^{-2}\psi^{2}(\frac{x}{n^{\delta}})\|_{H^{s_{1}}}+\|\alpha n^{-s-\frac{\delta}{2}-1}\cos(nx-\alpha t)\phi(\frac{x}{n^{\delta}})\|_{H^{s_{1}}} \nonumber\\
&\lesssim n^{-2+\frac{\delta}{2}}+n^{-s-1+s_{1}}.
\end{align}
Thus (\ref{e310}) follows from (\ref{e312}), (\ref{e313}) and (\ref{e314}).
\end{proof}
{\bf Step 2:} Now we are ready to show (\ref{e31})-(\ref{e33}). Let's first give two sequences of solutions
$(u_{\alpha,n}(x,t),\eta_{\alpha,n}(x,t))$, where $\alpha=\{0,1\}$, to the FW system (\ref{e12}) with initial data
\begin{equation}\label{e315}
 \begin{array}{ll}
 u_{\alpha,n}(x,0)=u^{\alpha,n}(x,0)=\frac{\alpha}{n}\psi(\frac{x}{n^{\delta}})+n^{-s-\frac{\delta}{2}}\phi(\frac{x}{n^{\delta}})\cos(nx),\\
 \eta_{\alpha,n}(x,0)=\eta^{\alpha,n}(x,0)=\frac{\alpha}{n}\psi(\frac{x}{n^{\delta}}).
\end{array}
\end{equation}
From the local well-posedness result, it's known that $(u_{\alpha,n}(x,t),\eta_{\alpha,n}(x,t))\in C([0,T);H^{s}\times H^{s-1})$ for
$s>\frac{3}{2}$. The energy estimate in (\ref{e26}) and (\ref{e315}) imply
\begin{align}\label{e316}
~
 &\|u_{1,n}(t)\|_{H^{s}}+\|u_{0,n}(t)\|_{H^{s}}+\|\eta_{1,n}(t)\|_{H^{s-1}}+\|\eta_{0,n}(t)\|_{H^{s-1}} \nonumber\\
 &\leq 2e^{C\widetilde{T}}(\|u_{1,n}(0)\|_{H^{s}}+\|u_{0,n}(0)\|_{H^{s}}+\|\eta_{1,n}(0)\|_{H^{s-1}}+\|\eta_{0,n}(0)\|_{H^{s-1}})
 \nonumber\\
 &\lesssim 2e^{C\widetilde{T}}(2n^{-1+\frac{\delta}{2}}+2).
 \end{align}
Then (\ref{e31}) follows from (\ref{e316}) by letting $n\rightarrow\infty$.
In view of (\ref{e315}) and the fact $\frac{1}{2}<\delta<1$, it's easy to see that (\ref{e32}) holds by
\begin{align}\label{e317}
~
 &\lim_{n\rightarrow\infty}(\|u_{1,n}(0)-u_{0,n}(0)\|_{H^{s}}+\|\eta_{1,n}(0)-\eta_{0,n}(0)\|_{H^{s-1}}) \nonumber\\
 &=\lim_{n\rightarrow\infty}(\|n^{-1}\psi(\frac{x}{n^{\delta}})\|_{H^{s}}+\|n^{-1}\psi(\frac{x}{n^{\delta}})\|_{H^{s-1}})
 \nonumber\\
 &\leq\lim_{n\rightarrow\infty} n^{-1+\frac{\delta}{2}}(\|\psi\|_{H^{s}}+\|\psi\|_{H^{s-1}})\rightarrow 0.
 \end{align}

At last, it remains to establish (\ref{e33}). Define the difference between approximate solutions $(u^{\alpha,n}(x,t),\eta^{\alpha,n}(x,t))$ and actual
solutions $(u_{\alpha,n}(x,t),\eta_{\alpha,n}(x,t))$ by
\begin{equation}\label{e318}
\omega_{\alpha}:=u^{\alpha,n}(x,t)-u_{\alpha,n}(x,t),\quad \rho_{\alpha}:=\eta^{\alpha,n}(x,t)-\eta_{\alpha,n}(x,t).
\end{equation}
For $s>\frac{3}{2}$ and $0\leq t<T$, let's first verify that
 \begin{equation}\label{e319}
\|\omega_{\alpha}\|_{H^{s}}+\|\rho_{\alpha}\|_{H^{s-1}}\rightarrow 0 \quad as ~~n\rightarrow\infty.
\end{equation}
Here we use the interpolation idea as in \cite{6}. For $s_{1}<s<s_{2}$, we first establish the energy estimate in $H^{s_{1}}$ norm, then estimate
$H^{s_{2}}$ norm and finally we can obtain the estimate in $H^{s}$.
It's obvious that $(\omega_{\alpha},\rho_{\alpha})$ satisfy
 \begin{equation}\label{e320}
 \left\{ \begin{array}{ll}
 \partial_{t}\omega_{\alpha}+\frac{1}{2}\partial_{x}(\omega_{\alpha}(u^{\alpha,n}+u_{\alpha,n}))-\partial_{x}\Lambda^{-2}(\rho_{\alpha}-\omega_{\alpha})=E,
 \\
 \partial_{t}\rho_{\alpha}+\frac{1}{2}\partial_{x}(\omega_{\alpha}(\eta^{\alpha,n}+\eta_{\alpha,n})+\rho_{\alpha}(u^{\alpha,n}+u_{\alpha,n}))=F,\\
 \omega_{\alpha}(x,0)=0, ~~\rho_{\alpha}(x,0)=0.
 \end{array} \right.
 \end{equation}
Note that the error bounds in $H^{s_{1}}\times H^{s_{1}-1}$ can be seen in the following lemma.
\begin{lemma}\label{lem3.3}
Assume $s>\frac{3}{2}$ and $\frac{1}{2}<\delta<1$, then there exists $s_{1}\leq s-1$ and $\varepsilon>0$ such that
\begin{equation}\label{e321}
 \|\omega_{\alpha}(t)\|_{H^{s_{1}}}+\|\rho_{\alpha}(t)\|_{H^{s_{1}-1}}\lesssim n^{-s-\varepsilon+s_{1}},~~for ~~0\leq t< T.
\end{equation}
\end{lemma}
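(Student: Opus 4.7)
The plan is to derive a differential inequality for the energy
\begin{equation*}
A(t) := \|\omega_{\alpha}(t)\|_{H^{s_{1}}}^{2} + \|\rho_{\alpha}(t)\|_{H^{s_{1}-1}}^{2},
\end{equation*}
and then invoke Gr\"onwall with the zero initial data $A(0)=0$. The forcing is already pinned down by Lemma \ref{lem3.2}; the bulk of the work is to check that the linear-in-$(\omega_{\alpha},\rho_{\alpha})$ response of system (\ref{e320}) is governed by constants independent of $n$, thanks to the uniform bounds (\ref{e36})--(\ref{e37}) on the approximate solutions and (\ref{e26})--(\ref{e316}) on the actual solutions.

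I would first fix $s_{1}$ with $\tfrac{1}{2} < s_{1} \leq s-1$, possible since $s > \tfrac{3}{2}$. This single choice simultaneously activates the commutator estimate (Lemma \ref{lem2.3} needs $s_{1}+1 \leq s$), the product/algebra estimate of Lemma \ref{lem2.4} (needs $s_{1} > \tfrac{1}{2}$), and the Sobolev embedding $H^{s_{1}} \hookrightarrow L^{\infty}$. I would then apply $\Lambda^{s_{1}}$ to the first equation of (\ref{e320}) and $\Lambda^{s_{1}-1}$ to the second, pair in $L^{2}$ against $\Lambda^{s_{1}}\omega_{\alpha}$ and $\Lambda^{s_{1}-1}\rho_{\alpha}$ respectively, and split each $\tfrac{1}{2}\partial_{x}(fg)$ into a transport piece $\tfrac{1}{2}g\,\partial_{x}f$ and a lower-order piece $\tfrac{1}{2}f\,\partial_{x}g$. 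The transport pieces yield commutators of the form $[\Lambda^{s_{1}}\partial_{x},\,u^{\alpha,n}+u_{\alpha,n}]\omega_{\alpha}$ (and the analogue for $\rho_{\alpha}$), controlled by Lemma \ref{lem2.3} in terms of $(\|u^{\alpha,n}\|_{H^{s}}+\|u_{\alpha,n}\|_{H^{s}})\,A(t)^{1/2}$; the nonlocal term $\partial_{x}\Lambda^{-2}(\rho_{\alpha}-\omega_{\alpha})$ gains one derivative, contributing only $A(t)^{1/2}$; and the coupling product $\omega_{\alpha}(\eta^{\alpha,n}+\eta_{\alpha,n})$ is estimated in $H^{s_{1}}$ by Lemma \ref{lem2.4}, yielding a bound proportional to $\|\omega_{\alpha}\|_{H^{s_{1}}}(\|\eta^{\alpha,n}\|_{H^{s-1}}+\|\eta_{\alpha,n}\|_{H^{s-1}})$. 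Pairing the source terms via Cauchy--Schwarz with Lemma \ref{lem3.2} produces
\begin{equation*}
\frac{d}{dt}A(t) \leq C_{1}\,A(t) + C_{2}\, n^{-s-\varepsilon+s_{1}}\, A(t)^{1/2},
\end{equation*}
with $C_{1},C_{2}$ independent of $n$. A standard Gr\"onwall-type argument (dividing by $A(t)^{1/2}$ on $\{A>0\}$) with $A(0)=0$ then gives $A(t)^{1/2} \lesssim n^{-s-\varepsilon+s_{1}}$ on $[0,T)$, which is exactly (\ref{e321}).

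The main obstacle I anticipate is the cross term $\partial_{x}(\omega_{\alpha}(\eta^{\alpha,n}+\eta_{\alpha,n}))$ in the $\rho_{\alpha}$ equation, since it couples the $H^{s_{1}-1}$ evolution of $\rho_{\alpha}$ back to the full $H^{s_{1}}$ norm of $\omega_{\alpha}$, preventing a decoupled estimate. The resolution is structural: the outer $\partial_{x}$ saves exactly one unit of Sobolev regularity, so the task collapses to an $H^{s_{1}}$ product bound that Lemma \ref{lem2.4} handles, provided $s_{1} > \tfrac{1}{2}$ so that both the algebra property and the $L^{\infty}$-control of the $\omega_{\alpha}$ factor are available. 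Once this balance is secured, the Gr\"onwall loop closes and (\ref{e321}) follows with the same $\varepsilon>0$ supplied by Lemma \ref{lem3.2}.
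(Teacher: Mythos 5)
Your proposal is correct and follows essentially the same route as the paper: an $H^{s_{1}}\times H^{s_{1}-1}$ energy estimate on the difference system (\ref{e320}), with the transport terms handled by the commutator estimate of Lemma \ref{lem2.3}, the nonlocal term absorbed by its derivative gain, the forcing controlled by Lemma \ref{lem3.2}, and Gr\"onwall with zero initial data. Your explicit choice $\tfrac{1}{2}<s_{1}\leq s-1$ and your treatment of the coupling term $\partial_{x}(\omega_{\alpha}(\eta^{\alpha,n}+\eta_{\alpha,n}))$ via Lemma \ref{lem2.4} merely spell out details the paper leaves implicit in passing from (\ref{e324}) to (\ref{e325}).
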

\begin{proof}
Applying $\Lambda^{s_{1}}$ to the first formula in (\ref{e320}), multiplying both sides by $\Lambda^{s_{1}}\omega_{\alpha}$  and integrating on $R$, we obtain
\begin{align}\label{e322}
\frac{d}{dt}\|\omega_{\alpha}\|^{2}_{H^{s_{1}}}
 &=2\langle\Lambda^{s_{1}}E,\Lambda^{s_{1}}\omega_{\alpha}\rangle-
\langle\partial_{x}\Lambda^{s_{1}}(\omega_{\alpha}(u^{\alpha,n}+u_{\alpha,n})),\Lambda^{s_{1}}\omega_{\alpha}\rangle \nonumber\\
 &+2\langle\partial_{x}\Lambda^{s_{1}-2}(\rho_{\alpha}-\omega_{\alpha}), \Lambda^{s_{1}}\omega_{\alpha}\rangle.
 \end{align}
By H\"{o}lder inequality, we have
$$
|\langle\Lambda^{s_{1}}E,\Lambda^{s_{1}}\omega_{\alpha}\rangle|\leq \|\Lambda^{s_{1}}E\|_{L^{2}}\|\Lambda^{s_{1}}\omega_{\alpha}\|_{L^{2}}=\|E\|_{H^{s_{1}}}\|\omega_{\alpha}\|_{H^{s_{1}}}.
$$
From H\"{o}lder inequality and Lemma \ref{lem2.3}, we get
\begin{equation}
\begin{aligned}
~
 &|\langle\partial_{x}\Lambda^{s_{1}}(\omega_{\alpha}(u^{\alpha,n}+u_{\alpha,n})),\Lambda^{s_{1}}\omega_{\alpha}\rangle| \nonumber\\
 &=|\langle[\partial_{x}\Lambda^{s_{1}},u^{\alpha,n}+u_{\alpha,n}]\omega_{\alpha},\Lambda^{s_{1}}\omega_{\alpha}\rangle+\langle (u^{\alpha,n}+u_{\alpha,n})\partial_{x}\Lambda^{s_{1}}\omega_{\alpha},\Lambda^{s_{1}}\omega_{\alpha} \rangle|
 \nonumber\\
 &\leq|\langle[\partial_{x}\Lambda^{s_{1}},u^{\alpha,n}+u_{\alpha,n}]\omega_{\alpha},\Lambda^{s_{1}}\omega_{\alpha}\rangle|+|\langle (u^{\alpha,n}+u_{\alpha,n})\partial_{x}\Lambda^{s_{1}}\omega_{\alpha},\Lambda^{s_{1}}\omega_{\alpha} \rangle|\nonumber\\
 &\leq \|[\partial_{x}\Lambda^{s_{1}},u^{\alpha,n}+u_{\alpha,n}]\omega_{\alpha}\|_{L^{2}}\|\Lambda^{s_{1}}\omega_{\alpha}\|_{L^{2}}+\frac{1}{2}|\langle \partial_{x}(u^{\alpha,n}+u_{\alpha,n}), (\Lambda^{s_{1}}\omega_{\alpha})^{2} \rangle|\nonumber\\
 &\lesssim \|u^{\alpha,n}+u_{\alpha,n}\|_{H^{s}}\|\omega_{\alpha}\|_{H^{s_{1}}}^{2}+\|\partial_{x}(u^{\alpha,n}+u_{\alpha,n})\|_{L^{\infty}}\|\omega_{\alpha}\|_{H^{s_{1}}}^{2}\nonumber\\
 &\lesssim \|\omega_{\alpha}\|_{H^{s_{1}}}^{2}.
\end{aligned}
\end{equation}
In addition, we also have
\begin{equation}
\begin{aligned}
~
 &|\langle\partial_{x}\Lambda^{s_{1}-2}(\rho_{\alpha}-\omega_{\alpha}), \Lambda^{s_{1}}\omega_{\alpha}\rangle| \nonumber\\
 &=\|\partial_{x}\Lambda^{s_{1}-2}\rho_{\alpha}\|_{L^{2}}\|\Lambda^{s_{1}}\omega_{\alpha}\|_{L^{2}}
 +\|\partial_{x}\Lambda^{s_{1}-2}\omega_{\alpha}\|_{L^{2}}\|\Lambda^{s_{1}}\omega_{\alpha}\|_{L^{2}}\nonumber\\
&\leq \|\rho_{\alpha}\|_{H^{s_{1}-1}} \|\omega_{\alpha}\|_{H^{s_{1}}}+\|\omega_{\alpha}\|_{H^{s_{1}}}^{2}
\end{aligned}
\end{equation}
Thus, (\ref{e322}) and estimates obtained above imply
\begin{equation}\label{e323}
\frac{d}{dt}\|\omega_{\alpha}\|_{H^{s_{1}}}\lesssim \|E\|_{H^{s_{1}}}+\|\rho\|_{H^{s_{1}-1}}+\|\omega_{\alpha}\|_{H^{s_{1}}}.
\end{equation}

Similarly, applying $\Lambda^{s_{1}-1}$ to the second formula in (\ref{e320}), multiplying both sides by $\Lambda^{s_{1}-1}\rho_{\alpha}$ and integrating on $R$, we obtain
\begin{align}\label{e324}
\frac{d}{dt}\|\rho_{\alpha}\|^{2}_{H^{s_{1}-1}}
 &=2\langle \Lambda^{s_{1}-1}F, \Lambda^{s_{1}-1}\rho_{\alpha} \rangle \nonumber\\
 &-\langle\partial_{x}\Lambda^{s_{1}-1}(\omega_{\alpha}(\eta^{\alpha,n}+\eta_{\alpha,n})+\rho_{\alpha}(u^{\alpha,n}+u_{\alpha,n})), \Lambda^{s_{1}-1}\rho_{\alpha} \rangle.
 \end{align}
Besides, we also have
\begin{equation}\label{e325}
\frac{d}{dt}\|\rho_{\alpha}\|_{H^{s_{1}-1}}\lesssim \|F\|_{H^{s_{1}-1}}+\|\rho_{\alpha}\|_{H^{s_{1}-1}}+\|\omega_{\alpha}\|_{H^{s_{1}}}.
\end{equation}
Hence the proof of the lemma is finished by using (\ref{e310}), (\ref{e323}), (\ref{e325}) and Gronwall's inequality.
\end{proof}

On the other hand, for $s_{2}>s>\frac{3}{2}$ and $\frac{1}{2}<\delta<1$, we can use (\ref{e26}) and (\ref{e315}) to obtain the error bounds in $H^{s_{2}}(R)\times H^{s_{2}-1}(R)$ by
\begin{align}\label{e326}
~
 &\|\omega_{\alpha}(t)\|_{H^{s_{2}}}+\|\rho_{\alpha}(t)\|_{H^{s_{2}-1}}\nonumber\\
 &=\|u^{\alpha,n}-u_{\alpha,n}\|_{H^{s_{2}}}+\|\eta^{\alpha,n}-\eta_{\alpha,n}\|_{H^{s_{2}-1}} \nonumber\\
 &\leq \|u^{\alpha,n}\|_{H^{s_{2}}}+\|\eta^{\alpha,n}\|_{H^{s_{2}-1}}+\|u_{\alpha,n}\|_{H^{s_{2}}}+\|\eta_{\alpha,n}\|_{H^{s_{2}-1}}\nonumber\\
 &\lesssim n^{s_{2}-s}+n^{\frac{\delta}{2}-1}\lesssim n^{s_{2}-s}.
 \end{align}
Therefore, (\ref{e319}) follows from (\ref{e321}), (\ref{e326}) and Lemma \ref{lem2.2}, that is
$$
\|\omega_{\alpha}\|_{H^{s}}+\|\rho_{\alpha}\|_{H^{s-1}}\lesssim
(n^{-s-\varepsilon+s_{1}})^{\frac{s_{2}-s}{s_{2}-s_{1}}}(n^{s_{2}-s})^{\frac{s-s_{1}}{s_{2}-s_{1}}}=
n^{-\varepsilon\frac{(s_{2}-s)}{s_{2}-s_{1}}}\rightarrow 0,~~as~~n\rightarrow\infty.
$$
Based on (\ref{e34}), (\ref{e36}), (\ref{e319}) and the triangle inequality, we can prove (\ref{e33}) by
\begin{align}\label{e327}
~
 &\lim_{n\rightarrow\infty}(\|u_{1,n}(t)-u_{0,n}(t)\|_{H^{s}}+\|\eta_{1,n}(t)-\eta_{0,n}(t)\|_{H^{s-1}})\nonumber\\
 &\geq \lim_{n\rightarrow\infty}(\|u^{1,n}(t)-u^{0,n}(t)\|_{H^{s}}-\|u_{1,n}(t)-u^{1,n}(t)\|_{H^{s}}-\|u_{0,n}(t)-u^{0,n}(t)\|_{H^{s}})\nonumber\\
 &+\lim_{n\rightarrow\infty}(\|\eta^{1,n}(t)-\eta^{0,n}(t)\|_{H^{s-1}}-\|\eta_{1,n}(t)-\eta^{1,n}(t)\|_{H^{s-1}}-\|\eta_{0,n}(t)-\eta^{0,n}(t)\|_{H^{s-1}})\nonumber\\
 &=\lim_{n\rightarrow\infty}(\|u^{1,n}(t)-u^{0,n}(t)\|_{H^{s}}+\|\eta^{1,n}(t)-\eta^{0,n}(t)\|_{H^{s-1}}) \nonumber\\
 &=\lim_{n\rightarrow\infty}\|\frac{1}{n}\psi(\frac{x}{n^{\delta}})+n^{-s-\frac{\delta}{2}}\phi(\frac{x}{n^{\delta}})(\cos(nx-t)-\cos(nx))\|_{H^{s}} \nonumber\\
 &\geq\lim_{n\rightarrow\infty}\|n^{-s-\frac{\delta}{2}}\phi(\frac{x}{n^{\delta}})(\cos(nx-t)-\cos(nx))\|_{H^{s}}-\lim_{n\rightarrow\infty}n^{-1+\frac{\delta}{2}}\nonumber\\
 &=\lim_{n\rightarrow\infty}\|2n^{-s-\frac{\delta}{2}}\phi(\frac{x}{n^{\delta}})\sin(nx-\frac{t}{2})\sin(\frac{t}{2})\|_{H^{s}}\geq 2|\sin(\frac{t}{2})|.
 \end{align}
 Up to now, the non-uniform continuity of data-to-solution map is completed.
\end{proof}

\subsection{H\"{o}lder continuity}
Although the date-to-uniform map is not uniformly continuous in $H^{s}(R)\times H^{s-1}(R)$ for $s>\frac{3}{2}$, we are able to prove that the map is H\"{o}lder continuous if choosing a properly weakened topology, which can be summarized in following theorem.
\begin{theorem}\label{thm3.4}
Assume $s>\frac{3}{2}$ and $s-1\leq r<s$, then the solution map for FW system (\ref{e12}) is H\"{o}lder continuous with H\"{o}lder exponent $\beta=s-r$ as a map from set $Q_{m}=\{(u,\eta)\in H^{s}(R)\times H^{s-1}(R): \|u\|_{H^{s}}+\|\eta\|_{H^{s-1}}\leq m\}$ with $H^{r}(R)\times H^{r-1}(R)$ norm to $C([0,\widetilde{T}];H^{r}(R)\times H^{r-1}(R))$, namely,
$$
\|(u(t),\eta(t))-(v(t),\theta(t))\|_{C([0,\widetilde{T}];H^{r}\times H^{r-1})}\leq C\|(u_{0},\eta_{0})-(v_{0},\theta_{0})\|^{\beta}_{H^{r}\times H^{r-1}},
$$
where $C$ depends on $s,r,\widetilde{T},m$ and $(u(t),\eta(t)), (v(t),\theta(t))$ are two solutions for (\ref{e12}) corresponding to the initial data $(u_{0},\eta_{0}),(v_{0},\theta_{0})\in Q_{m}$, respectively.
\end{theorem}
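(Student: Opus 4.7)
The plan is to reduce H\"{o}lder continuity at regularity $r$ to a Lipschitz-type estimate at the lowest admissible regularity $s-1$, and then interpolate against the a priori bound on the $H^s\times H^{s-1}$ norm of each solution. Since $(u_0,\eta_0),(v_0,\theta_0)\in Q_m$, Remark \ref{rem2.11} supplies a uniform bound
$$
\|u(t)\|_{H^s}+\|v(t)\|_{H^s}+\|\eta(t)\|_{H^{s-1}}+\|\theta(t)\|_{H^{s-1}}\leq C(m,\widetilde{T})\quad\text{for } t\in[0,\widetilde{T}],
$$
which will serve as the ``high-end'' factor in the interpolation.

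Setting $w=u-v$, $\rho=\eta-\theta$, the two solutions satisfy the difference system
$$
\left\{\begin{array}{l}
w_t+\tfrac{1}{2}\partial_x\bigl(w(u+v)\bigr)=\partial_x\Lambda^{-2}(\rho-w),\\[2pt]
\rho_t+\tfrac{1}{2}\partial_x\bigl((u+v)\rho+(\eta+\theta)w\bigr)=0,\\[2pt]
w(\cdot,0)=u_0-v_0,\quad \rho(\cdot,0)=\eta_0-\theta_0.
\end{array}\right.
$$
Applying $\Lambda^{s-1}$ to the first equation and pairing with $\Lambda^{s-1}w$ in $L^2$, I would split the transport term via the commutator
$$
\langle\Lambda^{s-1}\partial_x(w(u+v)),\Lambda^{s-1}w\rangle=\langle[\Lambda^{s-1}\partial_x,u+v]w,\Lambda^{s-1}w\rangle+\langle(u+v)\Lambda^{s-1}\partial_x w,\Lambda^{s-1}w\rangle,
$$
bounding the commutator by Lemma \ref{lem2.3} (whose hypothesis $k+1\leq s$ holds at $k=s-1$) and integrating the second piece by parts to produce $\tfrac{1}{2}\|\partial_x(u+v)\|_{L^\infty}\|w\|_{H^{s-1}}^2$. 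The forcing $\partial_x\Lambda^{-2}(\rho-w)$ is one-derivative smoothing, hence controlled in $H^{s-1}$ by $\|\rho\|_{H^{s-2}}+\|w\|_{H^{s-1}}$. A parallel $\Lambda^{s-2}$-estimate on the $\rho$-equation uses Lemma \ref{lem2.3} at $k=s-2$ for its transport commutator and Lemma \ref{lem2.4} (algebra property at the safe level $s-1>1/2$) to control the coupling via $\|\partial_x((\eta+\theta)w)\|_{H^{s-2}}\lesssim\|(\eta+\theta)w\|_{H^{s-1}}\lesssim(\|\eta\|_{H^{s-1}}+\|\theta\|_{H^{s-1}})\|w\|_{H^{s-1}}$. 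Summing and invoking the a priori bound gives
$$
\frac{d}{dt}\bigl(\|w\|_{H^{s-1}}+\|\rho\|_{H^{s-2}}\bigr)\leq C(m,\widetilde{T})\bigl(\|w\|_{H^{s-1}}+\|\rho\|_{H^{s-2}}\bigr),
$$
and Gronwall yields the base Lipschitz estimate $\|w(t)\|_{H^{s-1}}+\|\rho(t)\|_{H^{s-2}}\leq C(m,\widetilde{T})\bigl(\|w_0\|_{H^{s-1}}+\|\rho_0\|_{H^{s-2}}\bigr)$ on $[0,\widetilde{T}]$.

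To conclude, I would apply Lemma \ref{lem2.2} with triples $(s-1,r,s)$ and $(s-2,r-1,s-1)$, both of width $1$, to obtain
$$
\|w(t)\|_{H^r}\leq\|w(t)\|_{H^{s-1}}^{s-r}\|w(t)\|_{H^s}^{r-s+1},\qquad\|\rho(t)\|_{H^{r-1}}\leq\|\rho(t)\|_{H^{s-2}}^{s-r}\|\rho(t)\|_{H^{s-1}}^{r-s+1}.
$$
The high-order factors are bounded by $C(m,\widetilde{T})$ via the a priori estimate, and the low-order factors by the Lipschitz bound of the previous step. Since $r\geq s-1$, we have $\|w_0\|_{H^{s-1}}+\|\rho_0\|_{H^{s-2}}\leq\|w_0\|_{H^r}+\|\rho_0\|_{H^{r-1}}$, so the combined estimate becomes
$$
\|w(t)\|_{H^r}+\|\rho(t)\|_{H^{r-1}}\leq C(s,r,m,\widetilde{T})\bigl(\|u_0-v_0\|_{H^r}+\|\eta_0-\theta_0\|_{H^{r-1}}\bigr)^{s-r},
$$
which is the claimed H\"{o}lder continuity with exponent $\beta=s-r$; the degenerate endpoint $r=s-1$ is Lipschitz and is the base estimate itself.

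The main obstacle I anticipate is the bookkeeping at index $s-2$, which can be as low as $-1/2$ when $s$ is just above $3/2$. At that level there is no Sobolev algebra, so every bilinear term in the $\rho$-equation must be treated by first extracting one derivative through $\partial_x:H^{s-1}\to H^{s-2}$ and then applying Lemma \ref{lem2.4} at the safe level $s-1>1/2$; the commutator bound of Lemma \ref{lem2.3} remains admissible at $k=s-2$ because its sole requirement $k+1=s-1\leq s$ is trivially met. Once this technical point is navigated, the remainder of the argument is a standard energy-plus-interpolation reduction.
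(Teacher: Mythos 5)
Your proposal is correct and follows essentially the same route as the paper: an energy estimate with the commutator Lemma \ref{lem2.3} on the difference system at the low regularity $H^{s-1}\times H^{s-2}$ to get a Lipschitz bound, combined with the a priori bound from Remark \ref{rem2.11} at $H^{s}\times H^{s-1}$ and the interpolation Lemma \ref{lem2.2} to produce the H\"{o}lder exponent $\beta=s-r$. Your handling of the coupling term in the $\rho$-equation (extracting one derivative and applying the algebra property at level $s-1>\frac{1}{2}$) is in fact a slightly cleaner justification than the paper's direct H\"{o}lder splitting of $\Lambda^{s-2}\partial_x(\theta\omega)$, but the argument is otherwise identical.
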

\begin{proof}
Since $(u,\eta)\in H^{s}(R)\times H^{s-1}(R)$ and $(v,\theta)\in H^{s}(R)\times H^{s-1}(R)$ are solutions to the FW system (\ref{e12}), if define
$$
\omega=u-v,\quad \rho=\eta-\theta,
$$
then it's easy to see $(\omega,\rho)$ satisfy
\begin{equation}\label{e328}
 \left\{ \begin{array}{ll}
 \omega_{t}+\frac{1}{2}(\omega(u+v))=\partial_{x}\Lambda^{-2}(\rho-\omega),
 \\
 \rho_{t}+(u\rho+\theta \omega)_{x}=0,\\
 \omega(x,0)=u_{0}-v_{0}, ~~\rho(x,0)=\eta_{0}-\theta_{0}.
 \end{array} \right.
\end{equation}
From the interpolation Lemma \ref{lem2.2}, we have
\begin{align}\label{e329}
 \|\omega\|_{H^{r}(R)}+\|\rho\|_{H^{r-1}(R)}
 &\leq \|\omega\|_{H^{s-1}}^{s-r}\|\omega\|_{H^{s}}^{1+r-s}+\|\rho\|_{H^{s-2}}^{s-r}\|\rho\|_{H^{s-1}}^{1+r-s}  \nonumber\\
 &\leq (\|\omega\|_{H^{s}}^{1+r-s}+\|\rho\|_{H^{s-1}}^{1+r-s})(\|\omega\|_{H^{s-1}}^{s-r}+\|\rho\|_{H^{s-2}}^{s-r}) \nonumber\\
 &\leq 4(\|\omega\|_{H^{s}}+\|\rho\|_{H^{s-1}})^{1+r-s}(\|\omega\|_{H^{s-1}}+\|\rho\|_{H^{s-2}})^{s-r}.
 \end{align}
Applying the energy estimate (\ref{e26}) gives
\begin{align}\label{e330}
 \|\omega\|_{H^{s}}+\|\rho\|_{H^{s-1}}
 &\leq \|u\|_{H^{s}}+\|\eta\|_{H^{s-1}}+\|v\|_{H^{s}}+\|\theta\|_{H^{s-1}}  \nonumber\\
 &\leq 2e^{C\widetilde{T}} (\|u_{0}\|_{H^{s}}+\|\eta_{0}\|_{H^{s-1}}+\|v_{0}\|_{H^{s}}+\|\theta_{0}\|_{H^{s-1}})  \nonumber\\
 &\leq 4e^{C\widetilde{T}}m.
 \end{align}
In addition, we claim that there exists a constant $K>0$ such that
\begin{equation}\label{e331}
\|\omega\|_{H^{s-1}}+\|\rho\|_{H^{s-2}}\leq e^{K}(\|\omega_{0}\|_{H^{s-1}}+\|\rho_{0}\|_{H^{s-2}}).
\end{equation}

Now let's focus on establishing (\ref{e331}). Applying $\Lambda^{s-1}$ to the first equation in (\ref{e328}), multiplying both sides by $\Lambda^{s-1}\omega$ and integrating over $R$ with respect to $x$, we obtain
\begin{align}\label{e332}
 \frac{d}{dt}\|\omega\|^{2}_{H^{s-1}(R)}
 &= -\langle \Lambda^{s-1}\partial_{x}(\omega(u+v)),\Lambda^{s-1}\omega \rangle  \nonumber\\
 &+2\langle  \Lambda^{s-3}\partial_{x}(\rho-\omega),  \Lambda^{s-1}\omega\rangle:=I_{1}+I_{2}.
 \end{align}
By integration by parts, H\"{o}lder inequality and Lemma \ref{lem2.3}, we have
\begin{align}\label{e333}
 |I_{1}|
 &=|\langle \Lambda^{s-1}\partial_{x}(\omega(u+v)),\Lambda^{s-1}\omega \rangle |  \nonumber\\
 &=|\langle [\Lambda^{s-1}\partial_{x},(u+v)]\omega,\Lambda^{s-1}\omega \rangle+\langle (u+v)\Lambda^{s-1}\partial_{x}\omega, \Lambda^{s-1}\omega \rangle | \nonumber\\
 &=|\langle [\Lambda^{s-1}\partial_{x},(u+v)]\omega,\Lambda^{s-1}\omega \rangle-\frac{1}{2}\langle \partial_{x}(u+v)\Lambda^{s-1}\omega, \Lambda^{s-1}\omega \rangle | \nonumber\\
 &\lesssim \|u+v\|_{H^{s}}\|\omega\|_{H^{s-1}}^{2}+\|(u+v)_{x}\|_{L^{\infty}}\|\omega\|_{H^{s-1}}^{2}\nonumber\\
 &\lesssim \|u+v\|_{H^{s}}\|\omega\|_{H^{s-1}}^{2},
 \end{align}
 and
 \begin{align}\label{e334}
 |I_{2}|
 &=2|\langle  \Lambda^{s-3}\partial_{x}(\rho-\omega),  \Lambda^{s-1}\omega\rangle |  \nonumber\\
 &\lesssim |\langle  \Lambda^{s-3}\partial_{x}\rho,\Lambda^{s-1}\omega \rangle|+|\langle  \Lambda^{s-3}\partial_{x}\omega,\Lambda^{s-1}\omega \rangle| \nonumber\\
 &\leq \|\rho\|_{H^{s-2}}\|\omega\|_{H^{s-1}}+\|\omega\|_{H^{s-2}}\|\omega\|_{H^{s-1}} \nonumber\\
 &\leq\|\rho\|_{H^{s-2}}\|\omega\|_{H^{s-1}}+\|\omega\|_{H^{s-1}}^{2}.
 \end{align}
Thus (\ref{e26}), (\ref{e332}), (\ref{e333}) and (\ref{e334}) imply
\begin{equation}\label{e335}
\frac{d}{dt}\|\omega\|_{H^{s-1}}\lesssim \|u+v\|_{H^{s}}\|\omega\|_{H^{s-1}}+\|\rho\|_{H^{s-2}}+\|\omega\|_{H^{s-1}}\leq C_{1}(m,\widetilde{T})\|\omega\|_{H^{s-1}}+\|\rho\|_{H^{s-2}}.
\end{equation}
On the other hand, applying $\Lambda^{s-2}$ to the second equation in (\ref{e328}), multiplying both sides by $\Lambda^{s-2}\rho$ and integrating over $R$ with respect to $x$, we obtain
\begin{align}\label{e336}
 \frac{d}{dt}\|\rho\|^{2}_{H^{s-2}(R)}
 &=-2\langle \Lambda^{s-2}\partial_{x}(u\rho+\theta\omega),\Lambda^{s-2}\rho \rangle \nonumber\\
 &= -2\langle \Lambda^{s-2}\partial_{x}(u\rho),\Lambda^{s-2}\rho \rangle-2\langle \Lambda^{s-2}\partial_{x}(\theta\omega),\Lambda^{s-2}\rho \rangle  \nonumber\\
 &:=I_{3}+I_{4}.
 \end{align}
By integration by parts, H\"{o}lder inequality and Lemma \ref{lem2.3}, we can get
\begin{align}\label{e337}
 |I_{3}|
 &=2|\langle \Lambda^{s-2}\partial_{x}(u\rho),\Lambda^{s-2}\rho \rangle |  \nonumber\\
 &=2|\langle [\Lambda^{s-2}\partial_{x},u]\rho,\Lambda^{s-2}\rho \rangle+\langle u\Lambda^{s-2}\partial_{x}\rho, \Lambda^{s-2}\rho\rangle | \nonumber\\
 &=2|\langle [\Lambda^{s-2}\partial_{x},u]\rho,\Lambda^{s-2}\rho \rangle-\frac{1}{2}\langle \partial_{x}u\Lambda^{s-2}\rho, \Lambda^{s-2}\rho \rangle | \nonumber\\
 &\lesssim \|u\|_{H^{s}}\|\rho\|_{H^{s-2}}^{2}+\|u_{x}\|_{L^{\infty}}\|\rho\|_{H^{s-2}}^{2}\nonumber\\
 &\lesssim \|u\|_{H^{s}}\|\rho\|_{H^{s-2}}^{2},
 \end{align}
 \begin{align}\label{e338}
 |I_{4}|
 &=2|\langle  \Lambda^{s-2}\partial_{x}(\theta\omega),  \Lambda^{s-2}\rho\rangle |  \nonumber\\
 &\lesssim |\langle  \Lambda^{s-2}\partial_{x}\theta\omega,\Lambda^{s-2}\rho \rangle|+|\langle  \Lambda^{s-2}\partial_{x}\omega\theta,\Lambda^{s-2}\rho \rangle| \nonumber\\
 &\leq (\|\Lambda^{s-2}\partial_{x}\theta\|_{L^{2}}\|\omega\|_{L^{\infty}}\|\Lambda^{s-2}\rho\|_{L^{2}}+
 \|\Lambda^{s-2}\partial_{x}\omega\|_{L^{2}}\|\theta\|_{L^{\infty}}\|\Lambda^{s-2}\rho\|_{L^{2}}) \nonumber\\
 &\lesssim \|\theta\|_{H^{s-1}}\|\omega\|_{H^{s-1}}\|\rho\|_{H^{s-2}}.
 \end{align}
Thus (\ref{e26}) and (\ref{e336})-(\ref{e339}) yield
\begin{equation}\label{e339}
\frac{d}{dt}\|\rho\|_{H^{s-2}}\lesssim \|u\|_{H^{s}}\|\rho\|_{H^{s-2}}+(\|\theta\|_{H^{s-1}}+1)\|\omega\|_{H^{s-1}}\leq C_{2}(m,\widetilde{T})(\|\omega\|_{H^{s-1}}+\|\rho\|_{H^{s-2}}).
\end{equation}
From (\ref{e335}) and (\ref{e339}), we can conclude that there exists a $K(m,\widetilde{T})$ such that
\begin{equation}\label{e340}
\frac{d}{dt}(\|\omega\|_{H^{s-1}}+\|\rho\|_{H^{s-2}})\lesssim K(m,\widetilde{T})(\|\omega\|_{H^{s-1}}+\|\rho\|_{H^{s-2}}),
\end{equation}
hence (\ref{e331}) follows from (\ref{e340}).

Due to $s-1\leq r<s$, we can apply (\ref{e329}), (\ref{e330}), (\ref{e331}) and Sobolev embedding to obtain
\begin{equation}
\begin{aligned}
 \|\omega\|_{H^{r}(R)}+\|\rho\|_{H^{r-1}(R)}
 &\leq 4(\|\omega\|_{H^{s}}+\|\rho\|_{H^{s-1}})^{1+r-s}(\|\omega\|_{H^{s-1}}+\|\rho\|_{H^{s-2}})^{s-r} \nonumber\\
 &\leq 4(4e^{C\widetilde{T}}m)^{1+r-s}e^{K(m,\widetilde{T})\beta}(\|\omega_{0}\|_{H^{s-1}}+\|\rho_{0}\|_{H^{s-2}})^{\beta} \nonumber\\
 &\leq 4(4e^{C\widetilde{T}}m)^{1+r-s}e^{K(m,\widetilde{T})\beta}(\|\omega_{0}\|_{H^{r}}+\|\rho_{0}\|_{H^{r-1}})^{\beta},
 \end{aligned}
\end{equation}
which finishes the proof of Theorem \ref{thm3.4}.
\end{proof}

\section{Ill-posedness in the critical space $H^{\frac{3}{2}}\times H^{\frac{1}{2}}$}
In this section, we mainly consider ill-posedness problem of the FW system (\ref{e12}) in critical space $H^{\frac{3}{2}}(R)\times H^{\frac{1}{2}}(R)$ and the ill-posedness is due to the norm inflation. Namely, there exist a solution to (\ref{e12}) which are initially arbitrarily small and eventually arbitrarily large with respect to the $H^{\frac{3}{2}}\times H^{\frac{1}{2}}$ norm, in an arbitrarily short time. More precisely, we have the following result.
\begin{theorem}\label{thm4.1}
For $\forall \varepsilon > 0$, there exists $(u_{0}, \eta_{0})\in H^{s}(R)\times H^{s-1}(R)$ with $s>\frac{3}{2}$ such that the following statements hold

 (1) $\|u_{0}\|_{H^{\frac{3}{2}}}\leq \varepsilon$ and $\|\eta_{0}\|_{H^{\frac{1}{2}}}\leq \varepsilon$;

 (2) There is a unique solution $(u, \eta)\in C([0, T); H^{s}\times H^{s-1})$ to the Cauchy problem of (\ref{e12}) with
 a maximal lifespan $T < \varepsilon$;

 (3) Either
$$
\limsup_{t\rightarrow T^{-}}\|u(t)\|_{H^{\frac{3}{2}}}\geq \limsup_{t\rightarrow T^{-}}\|u(t)\|_{B_{2, \infty}^{\frac{3}{2}}}= \infty
$$
or
$$
\limsup_{t\rightarrow T^{-}}\|\eta(t)\|_{H^{\frac{1}{2}}}\geq \limsup_{t\rightarrow T^{-}}\|\eta(t)\|_{B_{\infty, \infty}^{0}}= \infty
$$
occurs.
\end{theorem}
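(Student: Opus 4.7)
\emph{Approach and blow-up criterion.} The strategy is a two-step one: first establish a critical-level continuation criterion, then exhibit data with small critical norm that triggers the criterion before time $\varepsilon$. The local well-posedness in \cite{5} produces a unique $(u,\eta)\in C([0,T);H^{s}\times H^{s-1})$ with maximal lifespan $T\in(0,\infty]$, and it suffices to prove $T<\varepsilon$ together with (1); (3) will then follow from the criterion. To set up the criterion I would start from Lemma \ref{lem2.10}, which controls $\frac{d}{dt}(\|u\|_{H^{s}}+\|\eta\|_{H^{s-1}})$ by $C(1+\|\partial_{x}u\|_{B^{1/2}_{2,\infty}\cap L^{\infty}}+\|\eta\|_{L^{\infty}})$ times itself, and then upgrade the two $L^{\infty}$ factors via the logarithmic Sobolev estimates in Lemma \ref{lem2.8} to obtain
\begin{equation*}
\frac{d}{dt}\bigl(\|u\|_{H^{s}}+\|\eta\|_{H^{s-1}}\bigr)\le C\bigl(1+A(t)\bigr)\log\bigl(e+\|u\|_{H^{s}}+\|\eta\|_{H^{s-1}}\bigr)\bigl(\|u\|_{H^{s}}+\|\eta\|_{H^{s-1}}\bigr)
\end{equation*}
with $A(t):=\|\partial_{x}u(t)\|_{B^{1/2}_{2,\infty}}+\|\eta(t)\|_{B^{0}_{\infty,\infty}}$. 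If $A$ stayed bounded on $[0,T)$, the log-Gronwall Lemma \ref{lem2.5} would force $\|u\|_{H^{s}}+\|\eta\|_{H^{s-1}}$ to remain finite up to $T$, and the local theory would then extend the solution past $T$, contradicting maximality. Since $\|\partial_{x}u\|_{B^{1/2}_{2,\infty}}\approx\|u\|_{B^{3/2}_{2,\infty}}$ modulo low frequencies, and the embeddings $H^{3/2}\hookrightarrow B^{3/2}_{2,\infty}$ and $H^{1/2}\hookrightarrow B^{0}_{\infty,\infty}$ hold, one of the alternatives in (3) must occur whenever $T<\infty$.

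\emph{Construction of the data.} For (1)--(2) I would adapt the high-frequency packet construction of \cite{3}: fixing $\phi\in C_{c}^{\infty}(R)$ and a large integer $N=N(\varepsilon)$, set
\begin{equation*}
u_{0}(x)=\sum_{k=N}^{2N}\tfrac{1}{k}\,2^{-\frac{3k}{2}}\phi(x)\cos(2^{k}x),\qquad \eta_{0}(x)=\sum_{k=N}^{2N}\tfrac{1}{k}\,2^{-\frac{k}{2}}\phi(x)\sin(2^{k}x).
\end{equation*}
Lemma \ref{lem2.1} together with the almost-orthogonality of the dyadic blocks gives $\|u_{0}\|_{H^{3/2}}^{2}+\|\eta_{0}\|_{H^{1/2}}^{2}\lesssim \sum_{k\ge N}k^{-2}\lesssim N^{-1}$, securing (1) for $N$ large. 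The top block simultaneously produces $\|u_{0}\|_{H^{s}}+\|\eta_{0}\|_{H^{s-1}}\gtrsim 2^{2N(s-3/2)}/N$, so the solution sits in a very supercritical regime relative to the critical norm of its data. To prove $T\le\varepsilon$ I would run a Picard/Duhamel expansion localized near frequency $2^{2N}$, and show that the quadratic self-interaction of the topmost packet through $u\partial_{x}u$ (respectively $(\eta u)_{x}$) deposits a resonant contribution at frequency $\sim 2^{2N+1}$ whose $B^{3/2}_{2,\infty}$ (resp.\ $B^{0}_{\infty,\infty}$) norm is of order $t\cdot 2^{2N}/N^{2}$ and therefore diverges within time $\varepsilon$ as $N\to\infty$. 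Since the $H^{s}\times H^{s-1}$ solution controls these Besov norms, this forces $T<\varepsilon$, and the criterion above then yields (3).

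\emph{Main obstacle.} The delicate part is a quantitative comparison: one must verify that the resonant contribution above dominates the non-resonant cross-packet interactions, the smoother linear forcing $\partial_{x}\Lambda^{-2}(\eta-u)$, and the cubic and higher Picard remainders, uniformly on $[0,\varepsilon]$. This requires sharp dyadic estimates using Lemmas \ref{lem2.6}--\ref{lem2.9} and careful control of the phases in the products of the $\cos/\sin$ packets. The two-component structure adds a further layer of bookkeeping, because the norm inflation can in principle come either from Burgers-type steepening in the $u$-equation or from the amplification of $\eta$ through the factor $u_{x}$ in the transport equation; the amplitudes of the packets in $u_{0}$ and $\eta_{0}$ must be balanced so that at least one of these channels activates before $t=\varepsilon$, and this balance -- rather than the construction itself -- is where the genuine difficulty lies.
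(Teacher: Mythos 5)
Your handling of part (3) coincides with the paper's: assume both critical Besov norms stay bounded, feed Lemma \ref{lem2.8} into the refined a priori estimate of Lemma \ref{lem2.10}, and close with the logarithmic Gronwall Lemma \ref{lem2.5} to contradict maximality of $T$. That half of your argument is fine.

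The genuine gap is in how you propose to obtain conclusion (2), i.e.\ a \emph{single} datum with small critical norm whose maximal lifespan satisfies $T<\varepsilon$. Your Picard/resonance program would, at best, show that for the solution with parameter $N$ the $B^{3/2}_{2,\infty}$ norm at time $t$ is of size $t\cdot 2^{2N}/N^{2}$ --- large but finite for each fixed $N$. A large finite critical norm does not contradict continued existence in $H^{s}\times H^{s-1}$, so the step ``this forces $T<\varepsilon$'' does not follow: what you would prove is norm inflation across a \emph{family} of solutions (discontinuity of the solution map at zero), which is a strictly weaker statement than the theorem's claim that one fixed solution has lifespan $T<\varepsilon$ with $\limsup_{t\to T^{-}}$ of the critical norm equal to $+\infty$. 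Moreover, the entire quantitative core of your route (dominance of the resonant self-interaction over cross-terms, the nonlocal forcing, and higher Picard iterates) is explicitly deferred as the ``main obstacle,'' so the proof is not actually executed. The paper avoids all of this with a wave-breaking mechanism: Lemma \ref{lem4.3} tracks $m(t)=u_{x}(t,q(t,x_{0}))$ along characteristics, derives the Riccati inequality $\partial_{t}m+m^{2}\le M(t)$ with $M$ controlled by Lemma \ref{lem4.2}, and concludes genuine finite-time blow-up with $T\le -2/u_{0}'(x_{0})$. The datum is then built from $P=\sum_{j\ge1}\mathcal{F}^{-1}\bigl(\tfrac{i\xi}{j\cdot2^{3j}}\mathbf{1}_{2^{j}\mathcal{C}}\bigr)$, whose $H^{3/2}$ norm is controlled by $\sum_{j}j^{-2}$ while $P'(0)$ diverges like $-\sum_{j}j^{-1}$; truncating at level $N$ and normalizing makes $\|u_{0}\|_{H^{3/2}}\le\varepsilon$ while $u_{0}'(0)<-2/\varepsilon$, whence $T<\varepsilon$. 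Note that your oscillatory packets have uniformly bounded first derivative (the amplitudes $k^{-1}2^{-3k/2}$ against frequency $2^{k}$ give $\sum_{k}k^{-1}2^{-k/2}<\infty$), so the wave-breaking route is not available for your data; you are locked into the unexecuted resonance analysis. To repair the proof you should replace the packet construction by data with a logarithmically divergent pointwise derivative and prove a Riccati-type blow-up criterion for the system, which requires the auxiliary conservation and $L^{\infty}$ bounds of Lemma \ref{lem4.2} (and the sign condition $\eta_{0}\ge0$) that your proposal never addresses.
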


Before proving the Theorem \ref{thm4.1}, let's first introduce two useful lemmas. In the following, we mainly consider the FW system along the flow $q(t,x)$ generated by $u$, that is to say,
 \begin{equation}\label{e41}
 \left\{ \begin{array}{ll}
 \frac{\partial q(t,x)}{\partial t}=u(t,q(t,x)),~~(t,x)\in [0,T)\times R, \\
 q(0,x)=x,
 \end{array} \right.
 \end{equation}
where there exists a unique solution $q\in C([0,T)\times R)$ to (\ref{e41}) such that
\begin{equation}\label{e42}
q_{x}(t,x)=e^{\int^{t}_{0}u_{x}(s,q(s,x))ds}>0, ~~for~~(t,x)\in [0,T)\times R.
\end{equation}
From \cite{9}, a simple computation implies
\begin{equation}\label{e43}
\eta(t,q(t,x))q_{x}(t,x)=\eta_{0}(x),
\end{equation}
where $(u,\eta)$ is the solution to FW system with the initial data $(u_{0},\eta_{0})$.
\begin{lemma}\label{lem4.2}
Let $(u_{0},\eta_{0})\in H^{s}(R)\times H^{s-1}(R)$ with $s>\frac{3}{2}$ and $T$ is the maximal existence time of the corresponding solution $(u,\eta)$ to FW system (\ref{e12}). For any $t\in [0,T)$, we have the following conservations
\begin{equation}\label{e44}
\int_{R}udx=\int_{R}u_{0}dx, \quad \int_{R}\eta dx=\int_{R}\eta_{0}dx.
\end{equation}
Moreover, if $\eta_{0}\geq 0$, we have
\begin{equation}\label{e45}
\|u\|_{L^{2}}\leq \|u_{0}\|_{L^{2}}+\frac{1}{2}\|\eta_{0}\|_{L^{1}}t
\end{equation}
and
\begin{equation}\label{e46}
\|u\|_{L^{\infty}}\leq \|u_{0}\|_{L^{\infty}}+(\|u_{0}\|_{L^{2}}+\frac{1}{2}\|\eta_{0}\|_{L^{1}})t+\frac{1}{2}\|\eta_{0}\|_{L^{1}}t^{2}.
\end{equation}
\end{lemma}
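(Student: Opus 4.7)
I would address the three assertions in order. The conservation laws $(\ref{e44})$ follow by integrating each equation of $(\ref{e12})$ in $x$ over $\R$: rewriting $u_{t}=-\frac{1}{2}(u^{2})_{x}+\partial_{x}\Lambda^{-2}(\eta-u)$ and $\eta_{t}=-(u\eta)_{x}$, both right-hand sides are exact $x$-derivatives of spatially decaying quantities, so $\frac{d}{dt}\int_{\R}u\,dx=\frac{d}{dt}\int_{\R}\eta\,dx=0$ under the integrability assumption on the data (which is satisfied for the concrete data used in the application to Theorem $\ref{thm4.1}$).

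For $(\ref{e45})$, the plan is to test the first equation of $(\ref{e12})$ against $u$ and integrate. The transport contribution $\int u^{2}u_{x}\,dx$ vanishes after integration by parts; moreover, since the symbol of $\partial_{x}\Lambda^{-2}$ is the purely imaginary $i\xi/(1+\xi^{2})$, this operator is skew-adjoint in $L^{2}$, so $\int u\,\partial_{x}\Lambda^{-2}u\,dx=0$. What remains is
\[
\frac{1}{2}\frac{d}{dt}\|u\|_{L^{2}}^{2}=\int_{\R}u\,\partial_{x}\Lambda^{-2}\eta\,dx=-\int_{\R}(\partial_{x}\Lambda^{-2}u)\,\eta\,dx.
\]
The key tool is the explicit convolution kernel $K(x)=-\frac{1}{2}\mathrm{sgn}(x)e^{-|x|}$ of $\partial_{x}\Lambda^{-2}$, which satisfies $\|K\|_{L^{\infty}}=\|K\|_{L^{2}}=\frac{1}{2}$; by Young's inequality this yields $\|\partial_{x}\Lambda^{-2}u\|_{L^{\infty}}\leq\frac{1}{2}\|u\|_{L^{2}}$. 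Next, the sign hypothesis $\eta_{0}\geq 0$ combined with the Lagrangian identity $(\ref{e43})$ and $q_{x}>0$ from $(\ref{e42})$ forces $\eta(t,\cdot)\geq 0$, and then $(\ref{e44})$ gives $\|\eta(t)\|_{L^{1}}=\int\eta\,dx=\int\eta_{0}\,dx=\|\eta_{0}\|_{L^{1}}$. Combining these estimates produces $\frac{d}{dt}\|u\|_{L^{2}}\leq\frac{1}{2}\|\eta_{0}\|_{L^{1}}$, and time integration delivers $(\ref{e45})$.

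For $(\ref{e46})$, the plan is the method of characteristics. Along the flow, the first equation of $(\ref{e12})$ reads $\frac{d}{dt}u(t,q(t,x))=\partial_{x}\Lambda^{-2}(\eta-u)(t,q(t,x))$; integrating in $t$ and using that $q(t,\cdot):\R\to\R$ is a diffeomorphism (by $(\ref{e42})$) to take the supremum in $x$ gives $\|u(t)\|_{L^{\infty}}\leq\|u_{0}\|_{L^{\infty}}+\int_{0}^{t}\|\partial_{x}\Lambda^{-2}(\eta-u)(s)\|_{L^{\infty}}\,ds$. The same kernel bounds yield $\|\partial_{x}\Lambda^{-2}\eta(s)\|_{L^{\infty}}\leq\frac{1}{2}\|\eta_{0}\|_{L^{1}}$ (by positivity plus conservation) and $\|\partial_{x}\Lambda^{-2}u(s)\|_{L^{\infty}}\leq\frac{1}{2}\|u(s)\|_{L^{2}}$, the latter controlled by $(\ref{e45})$; an elementary time integration then produces $(\ref{e46})$, in fact with slightly sharper constants than those stated. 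The main obstacle, and the step most worth flagging, is the sign propagation $\eta_{0}\geq 0\Rightarrow\eta(t,\cdot)\geq 0$: it is what trades the running $\|\eta(t)\|_{L^{1}}$ for the initial quantity $\|\eta_{0}\|_{L^{1}}$ and is the sole reason the right-hand sides of $(\ref{e45})$--$(\ref{e46})$ depend only on the data. This step relies essentially on $(\ref{e43})$ together with $q_{x}>0$, both of which in turn require the regularity $s>\frac{3}{2}$ for the flow $q$ to be well-defined and $C^{1}$; once this is in hand, the remainder of the argument is bookkeeping with the explicit exponential kernel and one application of Young's inequality.
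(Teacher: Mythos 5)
Your proposal is correct and follows essentially the same route as the paper: conservation of $\int u$ and $\int\eta$ by direct integration of the exact-derivative right-hand sides, the $L^{2}$ bound via testing against $u$, the vanishing of the transport and self-interaction terms, Young's inequality with the explicit kernel $-\frac{1}{2}\mathrm{sgn}(x)e^{-|x|}$ of $\partial_{x}\Lambda^{-2}$, and sign preservation of $\eta$ through (\ref{e42})--(\ref{e43}) to replace $\|\eta(t)\|_{L^{1}}$ by $\|\eta_{0}\|_{L^{1}}$, followed by the characteristics argument for the $L^{\infty}$ bound. The only cosmetic difference is that you move $\partial_{x}\Lambda^{-2}$ onto $u$ and use $L^{\infty}$--$L^{1}$ duality where the paper uses Cauchy--Schwarz plus Young on $\partial_{x}\Lambda^{-2}\eta$; you are also more explicit than the paper about the sign-propagation step, which is indeed the crux.
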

\begin{proof}
By the system (\ref{e12}) and integration by parts, we have
$$
\frac{d}{dt}\int_{R}udx=-\frac{1}{2}\int_{R}(u^{2})_{x}dx+\int_{R}\Lambda^{-2}\eta_{x}dx-\int_{R}\Lambda^{-2}u_{x}dx=0
$$
and
$$
\frac{d}{dt}\int_{R}\eta dx=-\int_{R}(u\eta)_{x}dx=0,
$$
which give (\ref{e44}). From (\ref{e43}), (\ref{e44}) and Young's inequality, we have
\begin{align}\label{e47}
 \frac{1}{2}\frac{d}{dt}\|u\|_{L^{2}}^{2}
 &=\frac{1}{2}\int_{R}u^{2}u_{x}dx+\int_{R}\Lambda^{-2}\eta_{x}udx-\int_{R}\Lambda^{-2}u_{x}udx \nonumber\\
 &=\int_{R}\Lambda^{-2}\eta_{x}udx\leq \|\partial_{x}\Lambda^{-2}\eta\|_{L^{2}}\|u\|_{L^{2}}=\frac{1}{2}\|\partial_{x}e^{-|x|}\ast\eta\|_{L^{2}}\|u\|_{L^{2}} \nonumber\\
 &\leq \frac{1}{2}\|sgn(x)e^{-|x|}\|_{L^{2}}\|\eta\|_{L^{1}}\|u\|_{L^{2}}\leq \frac{1}{2}\|\eta_{0}\|_{L^{1}}\|u\|_{L^{2}}.
 \end{align}
Hence (\ref{e45}) follows from (\ref{e47}) by integrating from $0$ to $t$ with respect to time variable. In addition, along the flow $q(t,x)$, we can obtain
\begin{align}\label{e48}
 \frac{du}{dt}
 &=(\partial_{t}u+uu_{x})(t,q(t,x))=\partial_{x}\Lambda^{-2}(\eta-u)(t,q(t,x)) \nonumber\\
 &\leq \frac{1}{2}\|sgn(x)e^{-|x|}\|_{L^{\infty}}\|\eta\|_{L^{1}}+\frac{1}{2}\|sgn(x)e^{-|x|}\|_{L^{2}}\|u\|_{L^{2}} \nonumber\\
 &\leq \frac{1}{2}\|\eta_{0}\|_{L^{1}}+\|u_{0}\|_{L^{2}}+\frac{1}{2}\|\eta_{0}\|_{L^{1}}t,
 \end{align}
where we use (\ref{e44}), (\ref{e45}) and Young's inequality. Integrating from $0$ to $t$ on both sides of (\ref{e48}) yields
$$
\|u\|_{L^{\infty}}\leq \|u_{0}\|_{L^{\infty}}+(\|u_{0}\|_{L^{2}}+\frac{1}{2}\|\eta_{0}\|_{L^{1}})t+\frac{1}{2}\|\eta_{0}\|_{L^{1}}t^{2}.
$$
\end{proof}
 \begin{lemma}\label{lem4.3}
 Let $(u_{0},\eta_{0})\in H^{s}(R)\times H^{s-1}(R)$ with $s>\frac{3}{2}$ and $\eta_{0}\geq 0$, assume there is $x_{0}\in R$ such that
 \begin{equation}\label{e49}
 u_{0}'(x_{0})< -2~~and~~ u_{0}'(x_{0})^{2}> 4(\|u_{0}\|_{L^{2}}+\|u\|_{L^{\infty}}+\|\eta_{0}\|_{L^{1}}),
 \end{equation}
 then the solution of (\ref{e12}) blows up in finite time. Moreover, the lifespan can be estimated by
 \begin{equation}\label{e410}
  T \leq -\frac{2}{u_{0}'(x_{0})}<1.
 \end{equation}
 \end{lemma}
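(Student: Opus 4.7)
The plan is to track the slope $M(t):=u_x(t,q(t,x_0))$ along the characteristic $q$ defined by \eqref{e41} issuing from the chosen point $x_0$, and to show that it satisfies a Riccati-type differential inequality whose right-hand side is controlled by the conserved/quasi-conserved quantities supplied by Lemma \ref{lem4.2}. Sign-preservation of $\eta$ (which follows from \eqref{e43} together with $q_x>0$ and $\eta_0\geq 0$) will be crucial for killing the otherwise troublesome $-\eta$ term with a favorable sign.

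The first concrete step is to differentiate the nonlocal equation $u_t+uu_x=\partial_x\Lambda^{-2}(\eta-u)$ once in $x$ and use the identity $\partial_x^2\Lambda^{-2}=\Lambda^{-2}-\mathrm{Id}$ to get
\[
u_{tx}+uu_{xx}+u_x^2=\Lambda^{-2}(\eta-u)-(\eta-u).
\]
Evaluating at $(t,q(t,x_0))$ and invoking the material derivative along the flow, I obtain
\[
\frac{dM}{dt}=-M^2+\bigl[\Lambda^{-2}(\eta-u)-\eta+u\bigr](t,q(t,x_0)).
\]
Writing $\Lambda^{-2}f=\tfrac12 e^{-|x|}*f$, the remainder on the right is bounded in absolute value by
$\tfrac12\|\eta\|_{L^1}+\|u\|_{L^\infty}+\|\eta\|_{L^\infty}+\|u\|_{L^\infty}$, and the pointwise sign inequality $-\eta(q)\leq 0$ (here is where $\eta_0\geq 0$ enters) lets me drop $-\eta(t,q(t,x_0))$ altogether instead of using an $L^\infty$ bound on $\eta$, which is not available from Lemma \ref{lem4.2}. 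The remaining terms are controlled by $\|\eta_0\|_{L^1}$ (conserved), $\|u_0\|_{L^\infty}$ and $\|u_0\|_{L^2}$ plus lower-order terms in $t$ via \eqref{e45}--\eqref{e46}. Since we anticipate $T<1$, I can absorb all of those time-dependent corrections into a constant of the form $C(\|u_0\|_{L^2}+\|u_0\|_{L^\infty}+\|\eta_0\|_{L^1})$.

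Having reached $\tfrac{dM}{dt}\leq -M^2+K$ with $K\leq \tfrac14 u_0'(x_0)^2$ by hypothesis \eqref{e49}, the next step is a short bootstrap: at $t=0$ we have $M(0)^2>4K$, i.e.\ $M^2-2K\geq \tfrac12 M^2$; a continuity argument shows this persists as long as the solution exists, yielding $\tfrac{dM}{dt}\leq -\tfrac12 M^2$. Dividing by $M^2$ (note $M(t)<0$ throughout) and integrating gives
\[
-\frac{1}{M(t)}\leq -\frac{1}{u_0'(x_0)}-\frac{t}{2},
\]
which forces $M(t)\to -\infty$ before $t^*:=-2/u_0'(x_0)<1$. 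Consequently $\|u_x\|_{L^\infty}$ blows up, which by the standard blow-up criterion rules out continued existence, hence $T\leq t^*$.

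The main obstacle is the lower-order term $\Lambda^{-2}(\eta-u)-\eta+u$: controlling $\Lambda^{-2}u$ and $u$ themselves without an a priori $H^s$ bound in hand requires the conservation $\int\eta\,dx=\int\eta_0\,dx$ (together with $\eta\geq 0$ so that $\|\eta\|_{L^1}=\|\eta_0\|_{L^1}$) and the linear/quadratic-in-$t$ growth estimates from Lemma \ref{lem4.2}. The choice $T<1$ is exactly what makes those polynomial-in-$t$ corrections harmless in the bootstrap, and matches the conclusion $-2/u_0'(x_0)<1$ that the threshold condition $u_0'(x_0)<-2$ was engineered to produce.
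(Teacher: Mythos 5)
Your proposal follows essentially the same route as the paper: the Riccati inequality for $u_x$ along the characteristic obtained from $\partial_x^2\Lambda^{-2}=\Lambda^{-2}-\mathrm{Id}$, dropping the $-\eta$ term by sign-preservation of $\eta$ along the flow, controlling the remainder via the $L^1$/$L^2$/$L^\infty$ bounds of Lemma \ref{lem4.2}, and a continuity bootstrap to $\tfrac{d}{dt}u_x\leq -\tfrac12 u_x^2$ followed by integration. The only differences are cosmetic (you absorb the polynomial-in-$t$ corrections into a single constant on $[0,1)$ where the paper picks an explicit time $T_1>1$ via the quadratic formula, and you bound $\Lambda^{-2}u$ by $\|u\|_{L^\infty}$ rather than $\|u\|_{L^2}$), so the argument is correct and matches the paper's.
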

\begin{proof}
Assume that $T > 0$ be maximal existence time of the solution $(u, \eta)$ to (\ref{e12}) and let
$$
m(t)= u_{x}(t, q(t, x_{0}))
$$
along the flow $q(t, x)$. Combining the first equation in FW system (\ref{e12}) with Lemma \ref{lem4.2}, we have
\begin{align}\label{e411}
&\partial_{t}m(t)+ m^{2}(t)= (\partial_{t}u_{x}+uu_{xx}+u_{x}^{2})(t,q(t,x_{0})) \nonumber \\
&=\partial_{x}^{2}\Lambda^{-2}(\eta-u)(t,q(t,x_{0}))=(I-\partial_{x}^{2})^{-1}(\eta-u)(t,q(t,x_{0}))-(\eta-u)(t,q(t,x_{0}))  \nonumber \\
&\leq \frac{1}{2}\|e^{-|x|}\ast\eta\|_{L^{\infty}}+\frac{1}{2}\|e^{-|x|}\ast u\|_{L^{\infty}}+\|u\|_{L^{\infty}}\leq \frac{1}{2}\|\eta\|_{L^{1}}+\|u\|_{L^{2}}+\|u\|_{L^{\infty}}  \nonumber \\
&\leq \frac{1}{2}\|\eta_{0}\|_{L^{1}}+\|u_{0}\|_{L^{2}}+\|u_{0}\|_{L^{\infty}}
+(\|u_{0}\|_{L^{2}}+\|\eta_{0}\|_{L^{1}})t+\frac{1}{2}\|\eta_{0}\|_{L^{1}}t^{2},
\end{align}
where we use the fact $\eta(t)$ keeps the sign along the flow $q(t,x)$. Define
$$
 M(t)= \frac{1}{2}\|\eta_{0}\|_{L^{1}}+\|u_{0}\|_{L^{2}}+\|u_{0}\|_{L^{\infty}}
  +(\|u_{0}\|_{L^{2}}+\|\eta_{0}\|_{L^{1}})t+\frac{1}{2}\|\eta_{0}\|_{L^{1}}t^{2}.
$$

By $(\ref{e49})$, we can choose
$$
T_{1}= \frac{-(\|u_{0}\|_{L^{2}}+\|\eta_{0}\|_{L^{1}})+
\sqrt{\|\eta_{0}\|_{L^{1}}m^{2}(0)+\|u_{0}\|_{L^{2}}^{2}-2\|\eta_{0}\|_{L^{1}}\|u_{0}\|_{L^{\infty}}}}{\|\eta_{0}\|_{L^{1}}}
$$
bigger than 1, such that
\begin{equation}\label{e412}
M(T_{1})\leq \frac{1}{2} m^{2}(0)< m^{2}(0).
\end{equation}
Standard arguments on continuity yields
\begin{equation}\label{e413}
M(t)<  m^{2}(t),~~for~~t\in[0,T_{1}]\cap[0,T).
\end{equation}
In view of (\ref{e411}) and (\ref{e413}), we obtain for $t\in[0,T_{1}]\cap[0,T)$
\begin{equation}\label{e414}
\partial_{t}m(t)\leq -m(t)^{2}+M(t)<-m(t)^{2}+m(t)^{2}=0,
\end{equation}
which means that $m(t)$ is a decreasing function over $t\in[0,T_{1}]\cap[0,T)$. Hence we have
\begin{equation}\label{e415}
m(t)<m(0)<0,~~for~~t\in[0,T_{1}]\cap[0,T).
\end{equation}

In addition, from (\ref{e412}) and (\ref{e415}), we can deduce
\begin{equation}\label{e416}
M(T_{1})< \frac{1}{2} m^{2}(0)<\frac{1}{2} m^{2}(t),~~for~~t\in[0,T_{1}]\cap[0,T).
\end{equation}
Hence (\ref{e411}) and (\ref{e416}) imply that there holds for $t\in[0,T_{1}]\cap[0,T)$
\begin{equation}\label{e417}
\partial_{t}m(t)\leq -m(t)^{2}+M(t)<-m(t)^{2}+M(T_{1})<-m(t)^{2}+\frac{1}{2}m(t)^{2}=-\frac{1}{2}m(t)^{2}.
\end{equation}
From (\ref{e49}), (\ref{e415}) and (\ref{e417}), we attain
$$
m(t)\rightarrow-\infty,~~as~~t\rightarrow-\frac{2}{m(0)}.
$$
\end{proof}

Now we are in the position to prove the Theorem \ref{thm4.1}, where the proof is based on a contradiction argument by constructing the special initial data.
\begin{proof}
Let $\mathcal{C}$ be an interval included in $(\frac{1}{2},1)$ with $\mathcal{C}\cap 2\mathcal{C}=\emptyset$. Define
$$
P(x)=\sum_{j\geq 1}\mathcal{F}^{-1}(\frac{i\xi}{j\cdot2^{3j}}\mathbf{1}_{2^{j}\mathcal{C}})(x)
$$
and
$$
P_{\leq N}(x)=\sum_{j=1}^{N}\mathcal{F}^{-1}(\frac{i\xi}{j\cdot2^{3j}}\mathbf{1}_{2^{j}\mathcal{C}})(x).
$$
It's easy to see that
\begin{align}\label{e418}
\|P(x)\|_{H^{\frac{3}{2}}}^{2}
&\leq \sum_{j\geq 1}\int_{R}(1+|\xi|^{2})^{\frac{3}{2}}\frac{|\xi|^{2}}{j^{2}\cdot2^{6j}}\mathbf{1}_{2^{j}\mathcal{C}} d\xi=\sum_{j\geq 1}\int_{2^{j}\mathcal{C}}(1+|\xi|^{2})^{\frac{3}{2}}\frac{|\xi|^{2}}{j^{2}\cdot2^{6j}} d\xi \nonumber \\
&=\sum_{j\geq 1}\frac{1}{j^{2}\cdot2^{6j}}\int_{2^{j}\mathcal{C}}(1+|\xi|^{2})^{\frac{3}{2}}\xi^{2}d\xi \lesssim
\sum_{j\geq 1}\frac{1}{j^{2}\cdot2^{6j}} 2^{6j} \nonumber \\
&=\sum_{j\geq 1}\frac{1}{j^{2}}=\frac{\pi^{2}}{6}.
\end{align}
Similarly, for a fixed $N\geq1$ and any $s\geq 0$, we have
\begin{equation}\label{e419}
\|P_{N}(x)\|_{H^{s}}^{2} \lesssim
\sum_{j= 1}^{N}\frac{1}{j^{2}\cdot2^{6j}} 2^{(3+2s)j}
=\sum_{j\geq 1}^{N}\frac{2^{(2s-3)j}}{j^{2}}<\infty.
\end{equation}
Therefore, for $\varepsilon>0$, let's define
\begin{equation}\label{e420}
u_{0,\varepsilon}=\frac{P_{\leq N}(x)\varepsilon}{\|P\|_{H^{\frac{3}{2}}}}, \quad  \eta_{0,\varepsilon}=\frac{\varphi(x)\varepsilon}{\|\varphi\|_{H^{\frac{1}{2}}}},
\end{equation}
where $\varphi\in \mathcal{S}(R)$ and $\varphi(x)\geq 0$.
Then it's obvious that
$$
\|u_{0,\varepsilon}\|_{H^{\frac{3}{2}}}\leq \varepsilon, \quad \|\eta_{0,\varepsilon}\|_{H^{\frac{1}{2}}}\leq \varepsilon,
$$
$(u_{0,\varepsilon},\eta_{0,\varepsilon})\in H^{s}(R)\times \mathcal{S}(R)$ and $\eta_{0,\varepsilon}\geq 0$.
On the other hand, we know
\begin{align}\label{e421}
P'(0)
&=\int_{R}\mathcal{F}(\partial_{x}P)(\xi)d\xi \nonumber \\
&=\sum_{j\geq 1}\frac{1}{j\cdot2^{3j}}\int_{R}(i\xi)^{2}\mathbf{1}_{2^{j}\mathcal{C}} d\xi  \nonumber \\
&=-\sum_{j\geq 1}\frac{1}{j\cdot2^{3j}}\int_{2^{j}\mathcal{C}}\xi^{2}d\xi \nonumber \\
&\lesssim -\sum_{j\geq 1}\frac{1}{j\cdot2^{3j}}2^{3j}= -\sum_{j\geq 1}\frac{1}{j}=-\infty.
\end{align}
From (\ref{e421}), we have
$$
u_{0,\varepsilon}'<-\frac{2}{\varepsilon}
$$
by choosing $N$ sufficiently large. By the Lemma \ref{lem4.3}, for the given initial data $(u_{0,\varepsilon},\eta_{0,\varepsilon})$, there is a unique solution $(u_{\varepsilon},\eta_{\varepsilon})\in C([0,T);H^{s}\times H^{s-1})$ for $s>\frac{3}{2}$ with the lifespan $T_{\varepsilon}\leq-\frac{2}{u_{0,\varepsilon}'}<\varepsilon$. Now it remains to show that either
\begin{equation}\label{e422}
\limsup_{t\rightarrow T_{\varepsilon}}\|u_{\varepsilon}\|_{B^{\frac{3}{2}}_{\infty, \infty}}=\infty
\end{equation}
or
\begin{equation}\label{e423}
\limsup_{t\rightarrow T_{\varepsilon}}\|\eta_{\varepsilon}\|_{B^{0}_{\infty, \infty}}=\infty
\end{equation}
happens. We prove the fact by a contradiction argument. Suppose that neither (\ref{e422}) nor (\ref{e423}) occurs, then there exists a constant $M_{\varepsilon}>0$ such that
\begin{equation}\label{e424}
\limsup_{t\rightarrow T_{\varepsilon}}\|u_{\varepsilon}(t)\|_{B^{\frac{3}{2}}_{\infty, \infty}}\leq M_{\varepsilon},\quad \limsup_{t\rightarrow T_{\varepsilon}}\|\eta_{\varepsilon}(t)\|_{B^{0}_{\infty, \infty}}\leq M_{\varepsilon}.
\end{equation}
The energy estimate in Lemma \ref{lem2.10} and the inequality in Lemma \ref{lem2.8} yield
\begin{align}\label{e425}
&\frac{d}{dt}(\|u_{\varepsilon}\|_{H^{2}}+\|\eta_{\varepsilon}\|_{H^{1}}) \nonumber \\
&\leq C(\|\partial_{x}u_{\varepsilon}\|_{B^{\frac{1}{2}}_{2,\infty}\cap L^{\infty}}+\|\eta_{\varepsilon}\|_{L^{\infty}}+1)(\|u_{\varepsilon}\|_{H^{2}}+\|\eta_{\varepsilon}\|_{H^{1}}) \nonumber \\
&\leq C(1+ \|u_{\varepsilon}(t)\|_{B_{2,\infty}^{\frac{3}{2}}}log(e+\|u_{\varepsilon}(t)\|_{H^{2}}) \nonumber \\
&+\|\eta_{\varepsilon}(t)\|_{B_{\infty,\infty}^{0}}log(e+\|\eta_{\varepsilon}(t)\|_{H^{1}}))
 (\|u_{\varepsilon}\|_{H^{2}}+\|\eta_{\varepsilon}\|_{H^{1}})  \nonumber \\
&\leq CM_{\varepsilon}(\|u_{\varepsilon}\|_{H^{2}}+\|\eta_{\varepsilon}\|_{H^{1}})
 log(e+\|u_{\varepsilon}(t)\|_{H^{2}}+\|\eta_{\varepsilon}(t)\|_{H^{1}}).
\end{align}
Based on (\ref{e425}), we can use the Gronwall inequality in Lemma \ref{lem2.5} to obtain
$$
\sup_{t\in[0,T_{\varepsilon}]}\|u_{\varepsilon}\|_{H^{2}}<\infty,
$$
which is in contradiction with the blow-up result in Lemma \ref{lem4.3}. Thus, either (\ref{e422}) or (\ref{e423}) would happen and the proof is completed.
\end{proof}

    \bigskip

\noindent \textbf{Acknowledgement.} {This project is supported by National Natural Science Foundation of China (No:11571057).}

\end{document}